\newtheorem{thm}{Theorem}[section]
\newtheorem{lem}[thm]{Lemma}
\newtheorem{prop}[thm]{Proposition}
\newtheorem{coro}[thm]{Corollary}
\theoremstyle{definition}
\newtheorem{ex}[thm]{Example}
\newtheorem{rem}[thm]{Remark}
\newtheorem{defn}[thm]{Definition}
\newcommand{\cE}{\mathcal{E}}
\newcommand{\cF}{\mathcal{F}}
\renewcommand{\cL}{\mathcal{L}}
\newcommand{\cX}{\mathcal{X}}
\renewcommand{\cR}{\mathcal{R}}
\newcommand{\diamonddash}{
  \begin{tikzpicture}[scale=0.08]
    \draw (0,1) -- (1,0) -- (0,-1) -- (-1,0) -- cycle;
    \draw (-1,0) -- (1,0);
  \end{tikzpicture}
}
\let\mod\undefined
\DeclareMathOperator{\mod}{\mathrm{mod}}
\DeclareMathOperator{\rep}{\mathrm{rep}}
\DeclareMathOperator{\add}{\mathrm{add}}
\DeclareMathOperator{\mods}{\mathrm{mod}}
\DeclareMathOperator{\Hom}{\mathrm{Hom}}
\DeclareMathOperator{\Ext}{\mathrm{Ext}}
\DeclareMathOperator{\proj}{\mathrm{proj}}
\DeclareMathOperator{\inj}{\mathrm{inj}}
\let\pd\undefined
\DeclareMathOperator{\pd}{\mathrm{pd}}
\title{An exact structure approach to almost rigid modules over quivers of type $\mathbb{A}$}
\author{Thomas Brüstle, Eric J. Hanson, Sunny Roy, and Ralf Schiffler}
\address{Thomas Brüstle, Départment de Mathématiques, Université de Sherbrooke, Sherbrooke, QC, J1K 2R1, Canada and Bishop’s University, Sherbrooke, QC, J1M 1Z7, Canada}
\email{Thomas.Brustle@USherbrooke.ca, tbruestl@bishops.ca}
\address{Eric J. Hanson, Department of Mathematics, North Carolina State University, Raleigh, NC 27695, USA}
\email{ejhanso3@ncsu.edu}
\address{Sunny Roy, Départment de Mathématiques, Université de Sherbrooke, Sherbrooke, QC, J1K 2R1, Canada}
\email{Sunny.Roy@USherbrooke.ca}
\address{Ralf Schiffler, Department of Mathematics, University of Connecticut, Storrs, CT 06269-1009, USA}
\email{schiffler@math.uconn.edu}
\begin{document}

\begin{abstract}
Let $A$ be the path algebra of a quiver of Dynkin type $\mathbb{A}_n$. The module category $\textup{mod}\,A$ has a combinatorial model as the category of diagonals in a polygon $S$ with $n+1$ vertices. 
The recently introduced notion of almost rigid modules is a weakening of the classical notion of rigid modules. The importance of this new notion stems from the fact that maximal almost rigid $A$-modules are in bijection with the triangulations of the polygon $S.$ 

In this article, we give a different realization of maximal almost rigid modules. We introduce a non-standard exact structure $\cE_\diamond$
on $\textup{mod}\,A$ such that the maximal almost rigid $A$-modules in the usual exact structure are exactly the maximal rigid $A$-modules in the new exact structure. 
A maximal rigid module in this setting is the same as a tilting module. Thus the tilting theory relative to the exact structure $\cE_\diamond$ translates into a theory of maximal almost rigid modules in the usual exact structure.

As an application, we show that 
 with the exact structure $\cE_\diamond$, the module category becomes a 0-Auslander category in the sense of Gorsky, Nakaoka and Palu.

 We also discuss generalizations to quivers of type 
 $\mathbb{D}$ and gentle algebras.

\end{abstract}
\maketitle

\setcounter{tocdepth}{1}

\tableofcontents

\section{Introduction}

Let $Q$ be a quiver of Dynkin type $\mathbb{A}_n$ and $K$ a field. It is a well-known fact that if 
$\eta =  (0 \rightarrow X \rightarrow E \rightarrow Y \rightarrow 0)$
is a nonsplit short exact sequence of (right) $KQ$-modules and both $X$ and $Y$ are indecomposable, then $E$ is either indecomposable or is the direct sum of precisely two nonisomorphic indecomposables. If $E \cong E_1 \oplus E_2$ is not indecomposable, we say that $\eta$ is a \emph{diamond} short exact sequence.

A module $M$ over $KQ$ is said to be \emph{rigid} if $\Ext^1_{KQ}(M,M) = 0$. In \cite{BGMS}, the notion of rigidity is weakened by ``ignoring'' those nonsplit short exact sequences whose middle term is indecomposable, and for type $\mathbb{A}$ these are the ones that  are not diamonds. More precisely, we can restate their definition as follows. Note that it is assumed that $n \geq 3$ throughout the paper \cite{BGMS}.

\begin{defn}\cite{BGMS}\label{def:almost_rigid}
    Let $M$ be a basic (right) $KQ$-module. Then $M$ is \emph{almost rigid} if there do not exist indecomposable direct summands $X$ and $Y$ of $M$ such that $\Ext^1_{KQ}(X,Y)$ contains a diamond short exact sequence. We say that $M$ is a \emph{maximal almost rigid (MAR)} module if $M$ is almost rigid and there does not exist a nonzero module $N$ such that $M \oplus N$ is a (basic) almost rigid module.
\end{defn}

One of the main results of \cite{BGMS} shows that the maximal almost rigid modules over $KQ$ are in bijection with the set of triangulations of an $(n+1)$-gon. Moreover, this bijection arises from a geometric realization of the category $\mods KQ$ of finitely-generated (right)  modules which associates either an oriented diagonal or an oriented boundary edge of the $(n+1)$-gon to each indecomposable module. As observed in \cite[Remark~4.7]{BGMS}, this differs from the realization of the cluster category of type $\mathbb{A}_{n-2}$ established in \cite{CCS} due to the orientation, the degree shift and the inclusion of boundary edges. 

The triangulations of the $(n+1)$-gon in question also appear in \cite{reading}, where they are used as a model for the ``Cambrian lattice'' of a quiver of type $\mathbb{A}_{n-2}$. The cover relations of this lattice correspond to ``flips'' of diagonals. On the level of MAR modules, this manifests as a type of ``mutation theory'', where any pair of MAR modules related by a cover necessarily differ by only a single direct summand. Implicit in this description are the facts that every MAR module has the same number of indecomposable direct summands and that the modules corresponding to boundary edges are direct summands of every MAR modules, each of which is established in \cite{BGMS}. These generalize well-known properties from classical tilting theory, namely that every tilting module has the same number of direct summands and that the projective-injective modules are direct summands of every tilting module.

The goal of this paper is to interpret the results described above using the theory of exact categories, and more specifically that of 0-Auslander categories\footnote{Note that 0-Auslander categories are actually defined as certain examples of \emph{extriangulated} categories. Extriangulated categories include both exact and triangulated categories as special cases.} established in the recent paper \cite{GNP}.

The paper is outlined as follows. In Section~\ref{sec:exact}, we recall background information about exact categories, 0-Auslander categories, and the associated notions of rigidity and tilting. We then recall technical details regarding the Auslander-Reiten quivers in type $\mathbb{A}_n$ in Section~\ref{sec:AR_quivers}. These well-known facts will be necessary in proving our main results later in the paper. In Section~\ref{sec:MAR}, we introduce an exact structure $\cE_\diamond$. We then prove our main results.

\begin{thm}\label{thm:intro}\textnormal{[{Proposition~\ref{prop:diamond_seqs}, Corollary~\ref{cor:almost rigid},  Corollary~\ref{cor:new}, Theorem~\ref{thm:0_auslander}, and Corollary~\ref{cor:tilting}}]}
    Let $Q$ be a quiver of type $\mathbb{A}_n$. Then the following hold.
    \begin{enumerate}
        \item Let $\eta = (0 \rightarrow X \rightarrow E \rightarrow Y \rightarrow 0)$ be a nonsplit short exact sequence with $X$ and $Y$ indecomposable. Then $\eta \in \cE_\diamond(Y,X) \subseteq \Ext^1_{KQ}(Y,X)$ if and only if $\eta$ is a diamond short exact sequence.
        \item A basic module $M \in \mods KQ$ is almost rigid if and only if it is $\cE_\diamond$-rigid.
        \item A basic module $M \in \mods KQ$ is maximal almost rigid if and only if it is $\cE_\diamond$-tilting.
        \item An indecomposable module $X \in \mods KQ$ is $\cE_\diamond$-projective if and only if either $X$ is projective or the Auslander-Reiten sequence $0 \rightarrow \tau X \rightarrow E \rightarrow X \rightarrow 0$ has $E$ indecomposable.
        \item An indecomposable module $X \in \mods KQ$ is $\cE_\diamond$-injective if and only if either $X$ is injective or the Auslander-Reiten sequence $0 \rightarrow X \rightarrow E \rightarrow \tau^{-1} X \rightarrow 0$ has $E$ indecomposable.
        \item $(\mods KQ, \cE_\diamond)$ is a 0-Auslander category.
    \end{enumerate}
\end{thm}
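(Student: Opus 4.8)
The plan is to check directly the three defining properties of a $0$-Auslander extriangulated category in the sense of \cite{GNP}: that $(\mods KQ,\cE_\diamond)$ has enough projectives and injectives, that its global dimension is at most $1$, and that its dominant dimension is at least $1$. The existence of enough $\cE_\diamond$-projectives and $\cE_\diamond$-injectives is already available from the construction and basic analysis of $\cE_\diamond$ in Section~\ref{sec:MAR} (it is needed even to speak of $\cE_\diamond$-tilting modules), and Corollary~\ref{cor:new} supplies the explicit description of the indecomposable $\cE_\diamond$-projectives and $\cE_\diamond$-injectives. So the real content is the two dimension statements, and the tool will be the combinatorics of the Auslander--Reiten quiver of $KQ$ recorded in Section~\ref{sec:AR_quivers}.

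For global dimension at most $1$, it suffices to find, for each indecomposable $M\in\mods KQ$, a short exact sequence $0\to L\to N\to M\to 0$ in $\cE_\diamond$ with $L$ and $N$ both $\cE_\diamond$-projective. If $M$ is already $\cE_\diamond$-projective this is trivial, so assume not; then by Corollary~\ref{cor:new} the module $M$ is neither projective nor the end term of an Auslander--Reiten sequence with indecomposable middle term, so its Auslander--Reiten sequence $0\to\tau M\to E\to M\to 0$ has $E$ a sum of two nonisomorphic indecomposables and is a diamond, hence lies in $\cE_\diamond$ by Proposition~\ref{prop:diamond_seqs}. This sequence does not yet suffice, however, since $\tau M$ and the summands of $E$ need not be $\cE_\diamond$-projective. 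Using the interval-module description of $\mods KQ$, one instead writes down a \emph{different} diamond whose outer terms are $\cE_\diamond$-projective: for the linearly oriented quiver, if $M=M[a,b]$ is the interval module on $\{a,\dots,b\}$ with $a<b<n$ (the only non-$\cE_\diamond$-projective case), one may take $L=M[b,n]$, an indecomposable projective, and $N=M[a,n]\oplus M[b,b]$, the sum of an indecomposable projective and a non-projective simple; both are $\cE_\diamond$-projective by Corollary~\ref{cor:new}, and a dimension-vector computation shows that $0\to L\to N\to M\to 0$ is a short exact sequence with decomposable middle term, hence a diamond, hence in $\cE_\diamond$. For an arbitrary orientation of $Q$ the same strategy applies, with the interval bookkeeping replaced by the Auslander--Reiten quiver description of Section~\ref{sec:AR_quivers}.

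For dominant dimension at least $1$, one must show that every $\cE_\diamond$-projective object admits an inflation into a $\cE_\diamond$-projective-injective object. Corollary~\ref{cor:new} identifies the indecomposable $\cE_\diamond$-projective-injectives (for the linear orientation, these are $M[1,n]=P_1=I_n$ together with all simple modules). If an indecomposable $\cE_\diamond$-projective $P$ is already $\cE_\diamond$-projective-injective, take the identity; otherwise, dually to the previous paragraph, one exhibits a diamond short exact sequence $0\to P\to N\to Y\to 0$ in $\cE_\diamond$ with $N$ a sum of a projective-injective indecomposable and a simple (hence $\cE_\diamond$-projective-injective) and $Y$ an indecomposable injective --- in the linear case, for $P=M[c,n]$ with $2\le c\le n-1$ one takes $N=M[1,n]\oplus M[c,c]$ and $Y=M[1,c]$, again verified by a dimension-vector computation. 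With the three properties established, $(\mods KQ,\cE_\diamond)$ is $0$-Auslander.

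The main obstacle is exactly the construction of these length-one $\cE_\diamond$-projective resolutions and their injective duals. Neither obvious candidate works: the classical projective resolution of an indecomposable $M$ has indecomposable middle term, so by Proposition~\ref{prop:diamond_seqs} it is not even a sequence in $\cE_\diamond$; and the Auslander--Reiten sequence, while it does lie in $\cE_\diamond$, has kernel $\tau M$ that is generally not $\cE_\diamond$-projective. One therefore has to pinpoint the ``correct'' diamond short exact sequence attached to each indecomposable. Once the right sequences are identified the remaining checks are routine dimension-vector bookkeeping, but carrying out this search uniformly over all orientations of $Q$ is where the technical combinatorial input of Section~\ref{sec:AR_quivers} is genuinely needed.
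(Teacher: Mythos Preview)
Your plan is correct and coincides with the paper's approach: both verify the $0$-Auslander conditions by exhibiting, for each indecomposable $M$, a diamond short exact sequence $0\to N\to E_1\oplus E_2\to M\to 0$ with $N$ and the $E_i$ all $\cE_\diamond$-projective (and dually for dominant dimension). In fact your explicit sequences in the linear case are precisely the paper's general construction specialized: the paper takes $E_1,E_2$ to be the modules at the ends of the corays $\Sigma_{\swarrow}(M)$ and $\Sigma_{\nwarrow}(M)$ and $N$ the module at $\Sigma_{\nwarrow}(E_1)\cap\Sigma_{\swarrow}(E_2)$, which for $M=M[a,b]$ in the linear orientation gives exactly your $M[a,n]$, $S(b)$, and $M[b,n]$.

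The one substantive step you defer is the verification, for arbitrary orientation, that this rectangle actually closes up (i.e.\ that a module $N$ sits at the required intersection); this is not automatic, and the paper handles it by a contradiction argument occupying the second half of the proof of Proposition~\ref{prop:hereditary}. A minor correction: matching dimension vectors is not by itself sufficient to conclude that a short exact sequence exists, so you should either exhibit the maps directly or invoke Lemma~\ref{lem:ext}.
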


As we discuss in Section~\ref{sec:exact}, the theory of tilting in 0-Auslander categories thus gives an alternate explanation for why MAR modules always have the same number of direct summands and always contain the modules corresponding to the boundary edges of the $(n+1)$-gon.

We discuss possible generalizations in Section~\ref{sect 5}. In Section~\ref{sec:typeD},  we give examples showing why the theory developed in this paper cannot be readily extended to quivers of Dynkin type $\mathbb{D}$. In  Section~\ref{sec:gentle} we give an example of a possible generalization of our results to gentle algebras. In that example conditions (1)-(5) of Theorem 1.2 still hold, but condition (6) doesn't.

\subsection*{Acknowledgements}
TB was supported by NSERC Discovery Grant RGPIN/04465-2019.
EJH was partially supported by Canada Research Chairs CRC-2021-00120, NSERC Discovery Grants RGPIN-2022-03960 and RGPIN/04465-2019, and an AMS-Simons travel grant. A portion of this work was completed while EJH was a postdoctoral fellow at l'Universit\'e du Qu\'ebec \`a Montr\'eal and l'Universit\'e de Sherbrooke.
SR is supported by the B2X FRQNT grant. RS was supported by the NSF grants  DMS-2054561 and DMS-2348909. The authors would like to thank the Isaac Newton Institute for Mathematical Sciences for support and hospitality during the programme Cluster Algebras and Representation Theory where work on this paper was undertaken. This work was supported by: EPSRC Grant Number EP/R014604/1. 
%%%%%%%%%%%%%%%%%%%%%%%%%%

\section{Exact structures and 0-Auslander categories}\label{sec:exact}

In this section, we recall relevant background information on exact structures and 0-Auslander categories. We conclude with a discussion of the mutation theory for maximal rigid objects in 0-Auslander categories as developed in \cite[Section~4]{GNP}. Since we are mostly interested in applying these results to an exact structure on the category $\mods \Lambda$ of finitely-generated (right) modules over a finite-dimensional algebra $\Lambda$, we restrict our exposition to this case. We note, however, that both exact structures/categories and 0-Auslander categories are defined much more generally.

\subsection{Exact structures}

Exact structures have been introduced by Quillen 
\cite{quillen} as an axiomatic framework that allows to use methods from homological algebra in a  context relative to a fixed class of short exact sequences. Besides the axiomatic approach, there are a number of ways to describe an exact structure  $\cE$. We will describe an exact structure as a certain subfunctor $\cE$ of the bifunctor $\Ext_\Lambda^1$, or by specifying what are the projective objects relative to  $\cE$, or by deciding which Auslander-Reiten sequences belong to $\cE$.

Let $\cE$ be a class of short exact sequences in $\mods \Lambda$. We assume that $\cE$ is closed under isomorphisms. When a short exact sequence $\eta = (0 \to A \xrightarrow{f} E \xrightarrow{g} B \to 0)$
belongs to $\cE$, we say that $\eta$ is an ($\cE$-)\emph{admissible short exact sequence}, and also that $f$ is an ($\cE$-)\emph{admissible monomorphism} and $g$ is an ($\cE$-)\emph{admissible epimorphism}. The closure under isomorphisms then implies that the classes of admissible short exact sequences, admissible monomorphisms, and admissible epimorphisms uniquely determine one another.
Quillen's definition can then be rephrased as follows:
\begin{defn}\label{def:exact}
A class $\cE$  of short exact sequences in $\mods \Lambda$ is said to be an \emph{exact structure} on $\mods\Lambda$ if all of the following hold:
 \begin{enumerate}
	\item  $\cE$ contains all split exact sequences.
        \item $\cE$ is closed under compositions of admissible monomorphisms, that is, if $f: X \to Y$ and $g: Y \to Z$ are admissible monomorphisms, then  $g \circ f : X \to Z$ is also an admissible monomorphism.
        Likewise, $\cE$ is closed under compositions of admissible epimorphisms.
        \item $\cE$ is closed under pushouts: If $f: X \to Y$ is an admissible monomorphism,
        and $X\xrightarrow{h} W$ is any morphism in $\mods \Lambda$, then the pushout of $f$ along $h$ yields a short exact sequence in $\cE$.
        Likewise, $\cE$ is closed under pullbacks.
	\end{enumerate}
\end{defn}

\begin{rem}
    In the literature, it is common both to refer to $\cE$ as an \emph{exact structure} on $\mods\Lambda$ and to refer to the pair $(\mods \Lambda,\cE)$ as an \emph{exact category}.
\end{rem}

Quillen's original definition contained also a fourth (so-called ``obscure'' axiom) which was shown in \cite{keller} to be superfluous.
It has been observed in \cite{AS} that in the context of module categories, the axioms (1) and (3) together can be rephrased as $\cE$ forming an additive subfunctor $\cE(-,-)$ of the bifunctor $\Ext^1_\Lambda(-,-)$. Here (1) states that $\cE$ contains the zero elements of all groups $\Ext^1(X,Y)$ (under the Baer sum), and axiom (3) corresponds to functoriality. An additive subfunctor of $\Ext^1_\Lambda(-,-)$ satisfying condition (2) has been called a {\em closed} subfunctor of $\Ext^1_\Lambda(-,-)$, and \cite{DRSS} show that these correspond precisely to the exact structures on $\mods\Lambda.$

For $\Lambda$ representation-finite, it has been shown in \cite{En18} (see also \cite[Theorem 5.7]{BHLR}) that an exact structure on $\mods\Lambda$ is uniquely determined by the Auslander-Reiten sequences it contains, and in fact there is a bijection between exact structures on $\mod \Lambda$ and sets of (isomorphism classes of) Auslander-Reiten sequences in $\mods\Lambda$.
One can also characterize an exact structure by the set of its $\cE$-projective (resp. $\cE$-injective) objects, defined as follows.

\begin{defn}\label{def:projectives}
Let $\cE$ be an exact structure on $\mods\Lambda$.
 \begin{enumerate}
     \item We say an object $P \in \mods\Lambda$ is \emph{$\cE$-projective}, in symbols $P \in \proj(\cE)$, if $g_* = \Hom_\Lambda(P,g)$ is an epimorphism for all $Y, Z \in \mods\Lambda$ and for all admissible epimorphisms $g: Y \to Z$. Equivalently, $P \in \proj(\cE)$ if and only if $\Hom_\Lambda(P,-)$ sends admissible exact sequences to exact sequences.
     \item We say an object $I \in \mods\Lambda$ is \emph{$\cE$-injective}, in symbols $I \in \inj(\cE)$, if $f_* = \Hom_\Lambda(f,I)$ is an epimorphism for all $X, Y \in \mods\Lambda$ and for all admissible monomorphisms $f: X \to Y$. Equivalently, $I \in \inj(\cE)$ if and only if $\Hom_\Lambda(-,I)$ sends admissible exact sequences to exact sequences.
 \end{enumerate}
\end{defn}

The axiomatic setup of an exact structure allows one to introduce an $\cE$-relative version for many concepts from homological algebra, like the existence of a long homology sequence. In particular, the fact that $\Hom_\Lambda(P,-)$ is an exact functor on admissible exact sequences can be rephrased by saying that $\cE(P,-)$ is zero. Therefore $P \in \mods\Lambda$ is an $\mathcal{E}$-projective object precisely when there is no non-split short exact sequence $(0\rightarrow A\rightarrow B \rightarrow P\rightarrow 0) \in \mathcal{E}$. The dual characterization likewise holds for $\cE$-injective objects.

The following definition from \cite{AS} uses this characterization to describe an exact structure in terms of its set of projective objects.

\begin{defn}  
Let $\mathcal{X}$ be 
a subcategory 
of $\mods\Lambda$. We define 
\[\cF_\mathcal{X} = \{ \eta= (0\rightarrow A\rightarrow B \rightarrow C\rightarrow 0) \;  | \;\Hom_\Lambda(X,-)  \mbox{ is an exact functor on $\eta$ for all } X \in \mathcal{X} \}. \]
\end{defn}

\begin{rem}\label{rem:FX}
    Since covariant hom-functors are always left exact, an exact sequence
    $$0 \rightarrow A \rightarrow B \xrightarrow{f} C \rightarrow 0$$
    is $\cF_\cX$-exact if and only if $\Hom(X,f)$ is surjective for all $X \in \cX$.
\end{rem} 

In both the following proposition and the remainder of the paper, we denote by $\proj(\Lambda)$ (resp. $\inj(\Lambda)$) the full subcategory of $\mods\Lambda$ consisting of those objects which satisfy the usual notion of projectivity (resp. injectivity) in $\mods\Lambda$. These can be seen as the ``true'' projectives and injectives, as opposed to the ``relative'' projectives and injectives. We also denote by $\tau$ the Auslander-Reiten translation. For a subcategory $\cX$, we denote by $\tau \cX$ the full subcategory of objects of the form $\tau X$ for $X \in \cX$.

\begin{prop}\label{prop:exact_structure_proj} \cite[Section~1]{AS}
Let $\mathcal{X}$ be 
a subcategory of $\mod \Lambda$. Then $\cF_{\mathcal{X}}$ is an exact structure. Moreover, $\proj(\cF_\mathcal{X}) = \add(\mathcal{X}	\cup \proj(\Lambda))$ and $\inj(\cF_\mathcal{X}) = \add(\tau \cX \cup \inj(\Lambda))$.
\end{prop}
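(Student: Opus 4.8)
The plan is to verify the three axioms of Definition~\ref{def:exact} using the pullback reformulation of Remark~\ref{rem:FX}, and then to prove the two descriptions of the relative (co)projectives by parallel approximation arguments, obtaining the injective description from the projective one with the help of the Auslander--Reiten formula. By Remark~\ref{rem:FX} together with the long exact $\Hom$--$\Ext$ sequence, a short exact sequence $\eta = (0 \to A \to B \to C \to 0)$ lies in $\cF_\cX$ if and only if the pullback $g^* \eta$ splits for every morphism $g \colon X \to C$ with $X \in \cX$. From this, closure under pullback is immediate via $g^*(\gamma^* \eta) = (\gamma g)^* \eta$, and closure under pushout follows from the interchange identity $g^*(\beta_* \eta) = \beta_*(g^* \eta)$; together with the evident facts that $\cF_\cX$ contains the split sequences and is closed under finite direct sums, this makes $\cF_\cX$ an additive subfunctor of $\Ext^1_\Lambda(-,-)$ (Baer sums being reduced to the previous points via the diagonal and codiagonal maps). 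It is moreover closed: if $g \colon Y \to Z$ and $g' \colon Z \to W$ are $\cF_\cX$-admissible epimorphisms, then $\Hom_\Lambda(X, g'g) = \Hom_\Lambda(X, g') \circ \Hom_\Lambda(X, g)$ is a composite of surjections for every $X \in \cX$, so $g'g$ is again an $\cF_\cX$-admissible epimorphism. By \cite{DRSS} a closed additive subfunctor of $\Ext^1_\Lambda$ is exactly an exact structure, which proves the first assertion.

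For $\proj(\cF_\cX)$, one inclusion is formal: ordinary projectives are $\cF_\cX$-projective because $\Hom_\Lambda(P,-)$ is exact on all short exact sequences, the objects of $\cX$ are $\cF_\cX$-projective by the very definition of $\cF_\cX$, and $\proj(\cF_\cX)$ is closed under $\add$. For the reverse inclusion I would show that every $M \in \mods\Lambda$ admits an $\cF_\cX$-admissible epimorphism from an object of $\add(\cX \cup \proj(\Lambda))$. Choosing a projective cover $p \colon Q_M \to M$ and a right $\add\cX$-approximation $\theta \colon Y_M \to M$ (which exists under the ambient finiteness hypotheses, and automatically when $\Lambda$ is representation-finite), the morphism $(p, \theta) \colon Q_M \oplus Y_M \to M$ is an epimorphism whose kernel sequence lies in $\cF_\cX$, since any $g \colon X \to M$ with $X \in \cX$ factors through $\theta$ and hence lifts along $(p, \theta)$. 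If $M$ is $\cF_\cX$-projective, then $\mathrm{id}_M$ lifts along $(p, \theta)$, so this sequence splits and $M$ is a direct summand of $Q_M \oplus Y_M$.

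The injective half is where $\cF_\cX$ fails to be self-dual, and this is the step I expect to require the real work. First I would prove $\add(\tau\cX \cup \inj(\Lambda)) \subseteq \inj(\cF_\cX)$: ordinary injectives are clear, so, reducing to indecomposable summands, it suffices to show that for $X \in \cX$ indecomposable non-projective every $\cF_\cX$-sequence $\eta \colon 0 \to \tau X \to B \to C \to 0$ splits. Here I would invoke the Auslander--Reiten formula in the form $\Ext^1_\Lambda(C, \tau X) \cong D\,\underline{\Hom}_\Lambda(X, C)$, which is natural in $C$; naturality implies that, for $g \colon X \to C$, the image of $\eta$ in $D\,\underline{\Hom}_\Lambda(X, C)$ sends $g$ to the same scalar that the image of $g^* \eta$ in $D\,\underline{\Hom}_\Lambda(X, X)$ assigns to $\mathrm{id}_X$. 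Since $\eta \in \cF_\cX$ forces $g^* \eta = 0$ for every such $g$, the image of $\eta$ vanishes on all of $\underline{\Hom}_\Lambda(X, C)$, hence $\eta$ itself is zero and so splits. For the reverse inclusion I would dualize the projective argument: given $N \in \mods\Lambda$, take an injective envelope $u \colon N \to I_N$ and a left $\add(\tau\cX)$-approximation $\psi \colon N \to Z_N$, and form the monomorphism $(u, \psi) \colon N \to I_N \oplus Z_N$. Using the Auslander--Reiten formula $\Ext^1_\Lambda(X, N) \cong D\,\overline{\Hom}_\Lambda(N, \tau X)$, exactness of $\Hom_\Lambda(X, -)$ on the cokernel sequence of $(u, \psi)$ amounts to every morphism $N \to \tau X$ factoring through $\psi$, which holds because $\psi$ is a left $\add(\tau\cX)$-approximation; so this sequence lies in $\cF_\cX$. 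If $N$ is $\cF_\cX$-injective, then $\mathrm{id}_N$ extends along $(u, \psi)$, the monomorphism splits, and $N$ is a direct summand of $I_N \oplus Z_N \in \add(\tau\cX \cup \inj(\Lambda))$.

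The routine parts are the axiom verification and the two soft inclusions. The main obstacle is the injective half: pinning down precisely how pullback translates under the Auslander--Reiten isomorphism --- equivalently, the naturality of the Auslander--Reiten pairing --- so as to obtain $\tau\cX \subseteq \inj(\cF_\cX)$, together with the minor bookkeeping about projective and injective direct summands and the existence of the left and right approximations used above.
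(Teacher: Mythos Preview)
The paper does not give its own proof of this proposition: it is stated as a citation to \cite[Section~1]{AS}, so there is no in-paper argument to compare against. What you have written is a reasonable reconstruction of how such a result is typically established, and your argument is essentially correct.

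A few remarks. Your verification of the exact-structure axioms is fine; note that you only check closure under composition of admissible epimorphisms, whereas Definition~\ref{def:exact}(2) asks for both. This is harmless because \cite{DRSS} shows that for an additive subfunctor of $\Ext^1_\Lambda$ the two closure conditions are equivalent, but you should say so explicitly rather than silently using only half of the axiom.

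For the two ``hard'' inclusions $\proj(\cF_\cX) \subseteq \add(\cX \cup \proj\Lambda)$ and $\inj(\cF_\cX) \subseteq \add(\tau\cX \cup \inj\Lambda)$, your approximation approach is valid but, as you note, it needs $\add\cX$ contravariantly finite and $\add(\tau\cX)$ covariantly finite. The statement as recorded in the paper carries no such hypothesis, and indeed there is a cleaner argument (closer in spirit to Auslander--Solberg and to how the result is used later via \cite{En18,BHLR}) that avoids approximations entirely by using almost split sequences. If $M$ is $\cF_\cX$-projective and not projective, the Auslander--Reiten sequence $0 \to \tau M \to E \to M \to 0$ cannot lie in $\cF_\cX$ (else $\mathrm{id}_M$ would lift, forcing a split), so there is some $X \in \cX$ and some $f \colon X \to M$ not factoring through $E$; by the almost split property $f$ is then a split epimorphism, whence $M \in \add\cX$. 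The dual argument with the Auslander--Reiten sequence starting at $N$ gives the injective inclusion and recovers $\tau\cX$ directly, without needing the naturality bookkeeping for the Auslander--Reiten formula that you flag as the main obstacle. Your approach is not wrong, but this alternative is shorter, removes the finiteness caveat, and aligns better with the paper's later use of Auslander--Reiten sequences to parametrize exact structures.
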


%%%%%%%%%%
\subsection{Relative homological dimensions and relative tilting}

The notions of $\mathcal{E}$-projectives and $\mathcal{E}$-injectives allow one to define relative versions of many  of the dimensions studied in homological algebra.

\begin{defn}\label{def:proj_dim}
Let $\mathcal{E}$ be an exact structure on $\mods\Lambda$.
\begin{enumerate}
    \item We say that $\cE$ \emph{has enough projectives} if for every $M \in \mods\Lambda$ there exists an $\cE$-admissible epimorphism $f_0: P_0 \rightarrow M$ with $P \in \proj(\cE)$.
    \item Suppose $\cE$ has enough projectives and let $M \in \mods\Lambda$. We say an exact sequence
    \[\xymatrix{\cdots \;  P_2 \ar@{->}[r]^{f_2} & P_1 \ar@{->}[r]^{f_1} & P_0  \ar@{->}[r]^{f_0} & M  \ar@{->}[r] &0}\]
     is an $\mathcal{E}$-projective resolution of $M$ 
    if all of $P_i$ are $\mathcal{E}$-projective and each short exact sequence
    $$0 \rightarrow \ker f_i \rightarrow P_i \rightarrow \mathrm{im} f_i \rightarrow 0$$
    is $\cE$-admissible.  The \emph{length} of a $\cE$-projective resolution is the smallest index $i$ for which $P_i = 0$ (or infinity if no such $i$ exists).
    \item Suppose $\cE$ has enough projectives and let $M \in \mods\Lambda$. The \emph{$\cE$-projective dimension} of $M$, denoted $\pd_\cE(M)$, is the minimal length among all $\cE$-projective resolutions of $M$.
    \item Suppose $\cE$ has enough projectives. The \emph{$\cE$-global dimension} of $\Lambda$ is $\sup_{M \in \mods\Lambda} \pd_\cE(M)$.
\end{enumerate}
\end{defn}

\begin{rem}
    \begin{enumerate}
\item In later sections, we restrict to the case $\Lambda = KQ$ for $Q$ a quiver of type $\mathbb{A}_n$. This algebra is representation finite, and thus every exact structure of $\mods KQ$ has enough projectives.
 \item     Note that the $\cE$-projective dimension of a module $M$ need not be related to the (classical) projective dimension $\pd(M)$: one may have $\pd_\cE(M) < \pd(M)$ since there are more $\cE$-projective modules than projectives. For example, each $\cE$-projective $M$ which is not in $\proj \Lambda$ has $0 = \pd_\cE(M) < \pd(M)$. 
 On the other hand, an $\cE$-projective resolution requires all maps be composed of $\cE$-admissible maps, so projective resolutions are not necessarily $\cE$-projective resolutions. 
 In fact, we give in Section~\ref{sec:typeD} an example of a module $M$ over a quiver algebra (which is always of global dimension one) whose $\cE$-projective dimension is two.
    \end{enumerate}
  \end{rem}

We now recall the following definition.

\begin{defn}\cite{AS_2}
Let $\cE$ be an exact structure on $\mods\Lambda$ with enough projectives. A module $T \in \mods\Lambda$ is said to be a \emph{$\cE$-tilting module} if the following statements all hold:
\begin{enumerate}
\item $\pd_{\cE}(T) \leq 1$.
\item For all $P \in \proj(\cE)$, there is an $\cE$-admissible short exact sequence $0\rightarrow P\rightarrow T_1 \rightarrow T_2\rightarrow 0$  with $T_1,T_2 \in \add(T)$.
\item $T$ is \emph{$\cE$-rigid}; that is, $\cE(T,T) = 0$.
\end{enumerate}
\end{defn}

\begin{rem}
    Note that \cite{AS_2} more generally defines a notion of $\cE$-tilting for modules of global dimension $n$ ($n \in \mathbb{N}$). As is standard in classical tilting theory, this can be clarified by referring to $\cE$-tilting modules of projective dimension $n$ as ``$n$-$\cE$-tilting'' (see e.g. \cite{wei}). One can then drop the subscript when dealing only with $n = 1$, resulting in the convention of the present paper and e.g. \cite{MM}.
  
    Note also that both \cite{GNP} and \cite{palu} refer to the notion of $\cE$-tilting used in this paper as just ``tilting''. Both papers also consider a different (but equivalent) notion of ``$\mathbb{E}$-tilting''.
\end{rem}

In Section~\ref{sec:MAR}, we will show that the maximal almost rigid modules in $\mods KQ$ ($Q$ of type $\mathbb{A}_n$) are precisely the basic $\cE$-tilting modules for some exact structure $\cE$. In order to do so, it will be useful to relate the notion of $\cE$-tilting to that of ``maximal $\cE$-rigidity''. We do this using the theory of 0-Auslander categories from \cite{GNP}, the definition of which we recall now. We use the simplified version of the definition appearing as \cite[Definition~5.1]{palu}. Recall, however, that \cite{palu} works with arbitrary extriangulated categories, while we work only with exact structures on $\mods\Lambda$.

\begin{defn}\label{def:tilting}
    Let $\cE$ be an exact structure on $\mods\Lambda$ with enough projectives. Then $(\mods\Lambda, \cE)$ is a \emph{0-Auslander category} if the following both hold.
    \begin{enumerate}
        \item The $\cE$-global dimension of $ \Lambda$ is at most 1.
        \item For all $P \in \proj(\cE)$ there exists an $\cE$-admissible short exact sequence $0 \rightarrow P \rightarrow Q \rightarrow I \rightarrow 0$ with $Q \in \proj(\cE) \cap \inj(\cE)$ and $I \in \inj(\cE)$. That is, the \emph{$\cE$-dominant dimension} of $\Lambda$ is at most 1.
    \end{enumerate}
\end{defn}

We now recall the following from \cite[Proposition~5.7]{palu} (see also \cite[Theorem~4.3]{GNP}). For a module $M \in \mods\Lambda$, we denote by $|M|$ the number of isomorphism classes of indecomposable direct summands of~$M$.

\begin{thm}\label{thm:tilting_char}
    Let $\cE$ be an exact structure on $\mods\Lambda$ with enough projectives, and suppose that $(\mods\Lambda,\cE)$ is a 0-Auslander category. Suppose further that there exists $P \in \mods(\Lambda)$ such that $\proj(\cE) = \add(P)$. 
    Then the following are equivalent for a module $T \in \mods\Lambda$.
    \begin{enumerate}
        \item $T$ is $\cE$-tilting.
        \item $T$ is \emph{maximal $\cE$-rigid}; that is, $T$ is $\cE$-rigid and if $T \oplus X$ is $\cE$-rigid, then $X \in \add(T)$.
        \item $T$ is \emph{complete $\cE$-rigid}; that is, $T$ is $\cE$-rigid and $|T| = |P|$.
    \end{enumerate}
\end{thm}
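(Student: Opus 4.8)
The plan is to reduce the statement to two lemmas — a counting lemma for $\cE$-tilting modules and a Bongartz-type completion lemma — and then derive the three equivalences formally. As a preliminary reduction I would note that, since $(\mods\Lambda,\cE)$ is a $0$-Auslander category, its $\cE$-global dimension is at most $1$; hence every module has $\cE$-projective dimension at most $1$, the first axiom in the definition of an $\cE$-tilting module is automatic, and being $\cE$-tilting reduces to being $\cE$-rigid together with the existence, for every $P'\in\proj(\cE)$, of an $\cE$-admissible sequence $0\to P'\to T_1\to T_2\to 0$ with $T_1,T_2\in\add(T)$. I would also record that the Grothendieck group $K_0(\mods\Lambda,\cE)$ of the exact category is free abelian of rank $|P|$, with basis the classes of the indecomposable $\cE$-projectives: the assignment $[M]\mapsto[P_0]-[P_1]$ associated to an $\cE$-projective resolution $0\to P_1\to P_0\to M\to 0$ gives an isomorphism onto $K_0(\add P)\cong\mathbb{Z}^{|P|}$, by the usual Schanuel/horseshoe argument carried out inside the exact category (using $\proj(\cE)=\add(P)$ and $\cE$-global dimension at most $1$).

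The first lemma is that every $\cE$-tilting module $T$ satisfies $|T|=|P|$. For $|T|\ge|P|$ I would feed the sequences of the second tilting axiom into $K_0(\mods\Lambda,\cE)$: for each indecomposable $\cE$-projective $P_j$ one has $[P_j]=[T_1^{(j)}]-[T_2^{(j)}]$ with $T_i^{(j)}\in\add(T)$, so the classes of the indecomposable summands of $T$ span the rank-$|P|$ group $K_0(\mods\Lambda,\cE)$. For $|T|\le|P|$ I would use $\cE$-rigidity: taking $\cE$-projective presentations $0\to U_i\to V_i\to T_i\to 0$ of the indecomposable summands and applying $\Hom_\Lambda(-,T_j)$, the $\cE$-projectivity of $V_i$ gives $\cE(T_i,T_j)=\operatorname{coker}(\Hom_\Lambda(V_i,T_j)\to\Hom_\Lambda(U_i,T_j))$, and this cokernel is canonically isomorphic to $\Hom_{K^b(\add P)}(\sigma_i,\sigma_j[1])$ for the two-term complexes $\sigma_i=(U_i\to V_i)$ placed in degrees $-1$ and $0$; thus $\cE$-rigidity of $T$ says the $\sigma_i$ form a presilting family in $K^b(\add P)$, and such families have at most $|P|$ indecomposable summands. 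The second lemma (Bongartz completion) is that every $\cE$-rigid module $M$ is a direct summand of some $\cE$-tilting module. For each indecomposable $\cE$-projective $P_j$, set $d_j=\dim_k\cE(M,P_j)$ and take the universal $\cE$-extension $0\to P_j\to B_j\to M^{d_j}\to 0$, i.e.\ the class in $\cE(M^{d_j},P_j)\cong\cE(M,P_j)^{d_j}$ corresponding to a $k$-basis of $\cE(M,P_j)$; this is legitimate because $\cE$ is an additive subfunctor of $\Ext^1_\Lambda$. Put $T=M\oplus\bigoplus_j B_j$. Applying $\cE(M,-)$, $\cE(-,M)$ and $\cE(-,B_i)$ to these sequences, and using that the connecting map $\Hom_\Lambda(M,M^{d_j})\to\cE(M,P_j)$ is surjective by construction, that $\cE(M,M)=0$, and that $\cE(P_j,-)=0$, one checks $\cE(T,T)=0$; since in each sequence $0\to P_j\to B_j\to M^{d_j}\to 0$ both $B_j$ and $M^{d_j}$ lie in $\add(T)$, it witnesses the second tilting axiom for $P_j$, so $T$ is $\cE$-tilting and $M$ is a summand of $T$.

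With these two lemmas the equivalences follow by formal manipulations with $\add(-)$. For $(1)\Leftrightarrow(3)$: an $\cE$-tilting module is $\cE$-rigid with $|T|=|P|$ by the first lemma; conversely, an $\cE$-rigid $T$ with $|T|=|P|$ is a summand of an $\cE$-tilting $T'$ by the second lemma, and $|T'|=|P|=|T|$ forces $\add(T)=\add(T')$. For $(1)\Rightarrow(2)$: if $T$ is $\cE$-tilting and $T\oplus X$ is $\cE$-rigid, completing $T\oplus X$ to an $\cE$-tilting $T''$ gives $|T''|=|P|=|T|$ and $\add(T)\subseteq\add(T'')$, hence $\add(T)=\add(T'')\ni X$. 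For $(2)\Rightarrow(1)$: if $T$ is maximal $\cE$-rigid, completing it to an $\cE$-tilting $T'$, the $\cE$-rigidity of $T'$ makes $T\oplus Y$ $\cE$-rigid for each indecomposable summand $Y$ of $T'$, so $Y\in\add(T)$ by maximality, whence $\add(T)=\add(T')$ and $T$ is $\cE$-tilting.

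I expect the inequality $|T|\le|P|$ in the first lemma to be the main obstacle: this is the step carrying the genuine $0$-Auslander (silting-theoretic) content, and it relies either on the classification of two-term presilting complexes over $\add P$ or, alternatively, on a non-degeneracy argument for the Euler form on $K_0(\mods\Lambda,\cE)$ (which itself uses that a $0$-Auslander category also has $\cE$-injective dimension at most $1$). The remaining ingredients — Bongartz completion and the combinatorics of $\add(-)$ — are routine.
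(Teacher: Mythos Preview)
The paper does not give its own proof of this theorem: it is stated as a result \emph{recalled} from \cite[Proposition~5.7]{palu} (with a parallel reference to \cite[Theorem~4.3]{GNP}), and no argument is supplied. So there is nothing in the paper to compare your proposal against.

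That said, your outline is the standard route taken in those references and is essentially sound. The reduction to the two lemmas (counting and Bongartz completion) and the formal $\add(-)$ manipulations at the end are exactly how such equivalences are established. Your Bongartz completion is correct once one observes that $\cE(B_i,B_j)=0$ follows most cleanly by applying $\cE(-,B_j)$ to $0\to P_i\to B_i\to M^{d_i}\to 0$ (so that one only needs $\cE(M,B_j)=0$ and $\cE(P_i,-)=0$), rather than trying to control $\cE(B_i,P_j)$ directly. For the inequality $|T|\le|P|$, your translation to two-term complexes in $K^b(\add P)$ is valid because $\add P\simeq\proj(\End_\Lambda(P)^{\mathrm{op}})$ for $P$ basic, so the Aihara--Iyama bound on presilting objects applies; you should, however, take \emph{minimal} $\cE$-projective presentations so that distinct indecomposable $T_i$ yield distinct indecomposable $\sigma_i$ (distinctness is forced by $H^0(\sigma_i)\cong T_i$, and indecomposability by minimality). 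Your alternative suggestion via an Euler form on $K_0(\mods\Lambda,\cE)$ also works, but, as you note, it needs that every object has $\cE$-injective dimension at most~1; this is indeed a consequence of the 0-Auslander axioms, though it is not immediate from Definition~\ref{def:tilting} as stated and is part of the content of \cite{GNP}.
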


%%%%%%%%%%%%%%%

\section{Combinatorics of Auslander-Reiten quivers of type $\mathbb{A}$}\label{sec:AR_quivers}

In this section, we recall background information on Auslander-Reiten quivers of path algebras of type $\mathbb{A}_n$. The results of this section are well-known, and we refer to \cite[Section~3.1]{schiffler} for further details.

Fix for the duration of this section a quiver $Q$ of type $\mathbb{A}_n$. We freely move between the language of ($K$-)representations of $Q$ and of (right) $KQ$-modules. We adopt the standard convention of indexing the vertices of $Q$ by $[n] = \{1,\ldots,n\}$ so that the arrows of $Q$ are always of the form $i \rightarrow i+1$ or $i \leftarrow i+1$. For $i \in [n]$, we denote by $P(i)$ and $I(i)$ the projective cover and injective envelope of the simple representation $S(i)$ supported at the vertex $i$. In examples, we denote other representations by their radical filtrations.

The indecomposable representations of $Q$ can be partitioned into sets $\cL_1,\ldots,\cL_n$ so that a representation $M$ lies in $\cL_i$ if and only if $M \cong \tau^{-m} P(i)$ for some $m$. There is then an embedding of the Auslander-Reiten quiver $\Gamma(Q)$ into $\mathbb{Z} \times [n] \subseteq \mathbb{R}^2$ which satisfies the following:
\begin{itemize}
    \item The representations in $\cL_i$ all lie on the horizontal line $y = i$ (for all $i \in [n]$).
    \item Every irreducible morphism is represented by an arrow pointing in the direction $(1,1)$ or $(1,-1)$.
\end{itemize}
When we refer to $\Gamma(Q)$, we always refer to the Auslander-Reiten quiver with this embedding. An example is shown in Figure~\ref{fig:AR_ex}. Note that the embedding $\Gamma(Q)$ is unique up to horizontal shift.

\begin{figure}
    \begin{tikzcd}[column sep = 1em]
        5 = P(5)\arrow[dr] && 4 \arrow[dr] && {\footnotesize \begin{matrix}3\\2\end{matrix}} \arrow[dr] && 1 = I(1)\\
        & {\small \begin{matrix}4\\5\end{matrix}} = P(4) \arrow[ur]\arrow[dr] && {\footnotesize\begin{matrix}3\\2 \ 4\end{matrix}} \arrow[ur]\arrow[dr] && {\footnotesize \begin{matrix}1 \ 3\\2\end{matrix}} = I(2) \arrow[ur]\arrow[dr]\\
        &&{\scriptsize \begin{matrix}3\\ 2 \ 4\\\phantom{2 \ \ }5\end{matrix}}  = P(3)\arrow[ur]\arrow[dr] && {\footnotesize \begin{matrix}1 \ 3 \ \\ \ \ \ 2 \ 4\end{matrix}} \arrow[ur]\arrow[dr] && 3 = I(3)\\
        & 2 = P(2)\arrow[ur]\arrow[dr] && {\scriptsize \begin{matrix}1 \ 3 \ \\ \ \ \ 2 \ 4\\ \phantom{11111}5\end{matrix}}\arrow[ur]\arrow[dr] && {\footnotesize \begin{matrix}3\\4\end{matrix}} = I(4)\arrow[ur]\\
        && {\footnotesize \begin{matrix}1\\2\end{matrix}} = P(1)\arrow[ur] && {\scriptsize \begin{matrix}3\\4\\5\end{matrix}} = I(5) \arrow[ur]
    \end{tikzcd}
    \caption{The embedded AR quiver $\Gamma(Q)$ for $Q = 1\rightarrow 2 \leftarrow 3 \rightarrow 4 \rightarrow 5$.}\label{fig:AR_ex}
\end{figure}
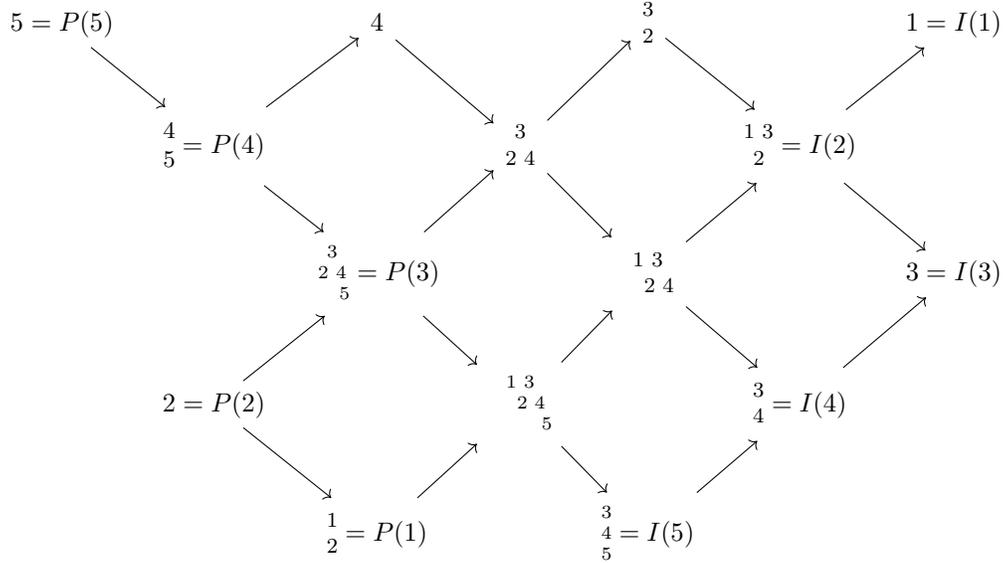

Let $M$ be an indecomposable representation. We denote by $\Sigma_{\nearrow}(M)$ and $\Sigma_{\searrow}(M)$ the rays (in $\mathbb{R}^2$) of slope 1 and $-1$ with base point corresponding to $M \in \Gamma(Q)$. Similarly, we denote by $\Sigma_{\swarrow}(M)$ and $\Sigma_{\nwarrow}(M)$ the corays of slope 1 and $-1$ with base point corresponding to $M \in \Gamma(Q)$. We say that an indecomposable representation $N$ (seen as a vertex of $\Gamma(Q)$) lies \emph{at the end of} e.g. $\Sigma_{\nearrow}(M)$ if $N$ lies on $\Sigma_{\nearrow}(M)$ and there does not exist $L \neq N$ such that $L$ lies on $\Sigma_{\nearrow}(N)$. 
For example, in Figure~\ref{fig:AR_ex}, $I(3), I(4)$, and $I(5)$ lie on $\Sigma_{\nearrow}(I(5))$, with $I(3)$ lying at the end. Note that, in the language of \cite[Section~3.1.4.1]{schiffler}, we have that there is a sectional path from $M$ to $N$ if and only if $N$ lies on $\Sigma_{\nearrow}(M) \cup \Sigma_{\searrow}(M)$.

We denote by $\cR_\rightarrow (M)$ the set of indecomposable representations whose position in $\Gamma(Q)$ lies in the slanted rectangular region whose left boundary is $\Sigma_{\nearrow}(M) \cup \Sigma_{\searrow}(M)$. For example, in Figure~\ref{fig:AR_ex}, we have
$$\cR_\rightarrow \left({\footnotesize \begin{matrix}1 \ 3 \ \\ \ \ \ 2 \ 4\end{matrix}}\right) = \left\{{\footnotesize \begin{matrix}1 \ 3 \ \\ \ \ \ 2 \ 4\end{matrix}}, I(1), I(2), I(3), I(4)\right\}.$$
We symmetrically denote by $\cR_\leftarrow(M)$ the set of indecomposable representations whose position in $\Gamma(Q)$ lies in the slanted rectangular region whose right boundary is $\Sigma_{\swarrow}(M) \cup \Sigma_{\nwarrow}(M)$. See \cite[Section~3.1.4.1]{schiffler} for additional examples.

The following terminology is useful for characterizing the existence of short exact sequences and nonzero morphisms.

\begin{defn}\label{defn:rects}
    Let $M$ and $N$ be indecomposable representations.
   \begin{enumerate}
        \item We say that $(M,N)$ is a \emph{rectangular pair} if the following both hold.
         \begin{enumerate}
                \item The ray $\Sigma_{\nearrow}(M)$ and coray $\Sigma_{\nwarrow}(N)$ intersect and there is a module $E_1$ at $\Sigma_{\nearrow}(M)\cap \Sigma_{\nwarrow}(N)$.
            \item The ray $\Sigma_{\searrow}(M)$ and coray $\Sigma_{\swarrow}(N)$ intersect and there is a module $E_2$ at $\Sigma_{\searrow}(M)\cap \Sigma_{\swarrow}(N)$.
        \end{enumerate}
        \item Suppose $(M,N)$ is a rectangular pair. We say that $(M,N)$ is \emph{degenerate} if $N$ lies on $\Sigma_{\nearrow}(M) \cup \Sigma_{\searrow}(M)$. Otherwise, we say that $(M,N)$ is \emph{non-degenerate}.
        \item We say that $(M,N)$ is a \emph{lower deleted pair} if the following both hold.
        \begin{enumerate}
            \item The ray $\Sigma_{\nearrow}(M)$ and coray $\Sigma_{\nwarrow}(N)$ intersect and there is a module $E$ at $\Sigma_{\nearrow}(M)\cap \Sigma_{\nwarrow}(N)$.
            \item Let $L$ be the module at the end of the ray $\Sigma_{\searrow}(M)$. Then $\tau^{-1} L$ is the module at the end of the coray $\Sigma_{\swarrow}(N)$.
        \end{enumerate}
        \item We say that $(M,N)$ is a \emph{upper deleted pair} if the following both hold.
        \begin{enumerate}
            \item The ray $\Sigma_{\searrow}(M)$ and coray $\Sigma_{\swarrow}(N)$ intersect and there is a module $E$ at $\Sigma_{\searrow}(M)\cap \Sigma_{\swarrow}(N)$.
            \item Let $L$ be the module at the end of the ray $\Sigma_{\nearrow}(M)$. Then $\tau^{-1} L$ is the module at the end of the coray $\Sigma_{\nwarrow}(N)$.
        \end{enumerate}
    \end{enumerate}
\end{defn}

Schematic diagrams of an upper deleted pair and of a non-degenerate rectangular pair are shown in Figure~\ref{fig:seq_ex}. Note that the shape of an upper (resp. lower) deleted pair is that of a slanted rectangle with a single endpoint deleted.
 We defer specific examples to Example~\ref{ex:homs_and_exts}.
 
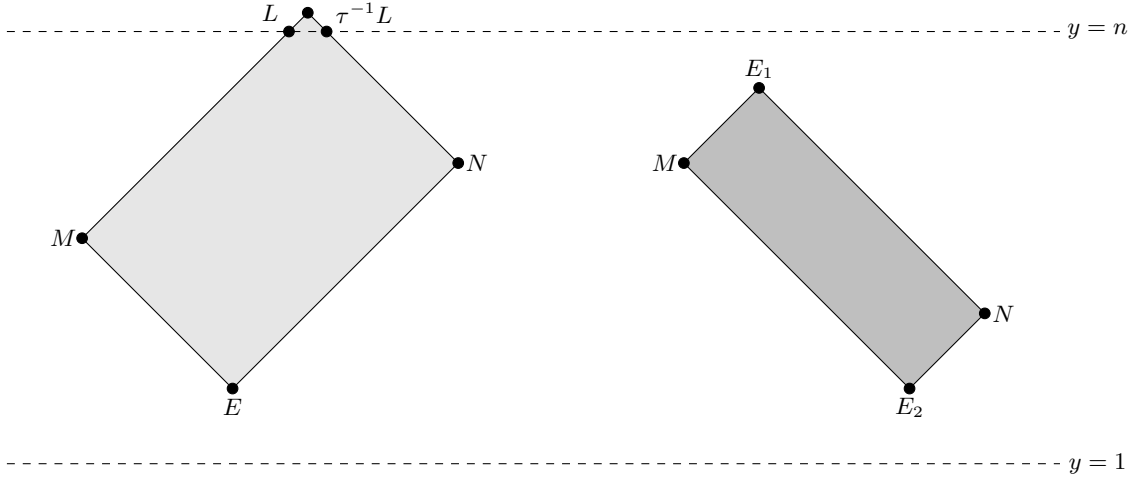
\begin{figure}
{\small
\[
\begin{tikzpicture}
[inner sep=0.5mm]
\filldraw[fill=gray!20!white, draw=black] (0,0) -- (3,3) -- (5,1) -- (2,-2) -- cycle;
 \draw [dashed] (-1,2.75) -- (13,2.75);
\draw [dashed] (-1,-3) -- (13,-3);
\node at (13.5,2.75) {$y = n$};
\node at (13.5,-3) {$y = 1$};
\node at (-0.25,0) {$M$};
 \node at (0,0) [shape=circle,draw,fill] {};
 \node at (5,1) [shape=circle,draw,fill] {};
 \node at (3,3) [shape=circle,draw,fill] {};
  \node at (2.75,2.75) [shape=circle,draw,fill] {};
  \node at (2.5,3) {$L$};
  \node at (3.75,3) {$\tau^{-1}L$};
   \node at (3.25,2.75) 
   [shape=circle,draw,fill] {};
 \node at (2,-2) [shape=circle,draw,fill] {};
 \node at (2,-2.25) {$E$};
 \node at (5.25,1) {$N$};

 \filldraw[fill=gray!50!white, draw=black] (8,1) -- (9,2) -- (12,-1) -- (11,-2) -- cycle;
\node at (7.75,1) {$M$};
 \node at (8,1) [shape=circle,draw,fill] {};
 \node at (9,2) [shape=circle,draw,fill] {};
 \node at (12,-1) [shape=circle,draw,fill] {};
  \node at (11,-2) [shape=circle,draw,fill] {};
  \node at (9,2.25) {$E_1$};
   \node at (11,-2.25) {$E_2$};
 \node at (12.25,-1) {$N$};
  \end{tikzpicture}
\]}
\caption{Schematic diagrams of an upper deleted (light gray) and of a non-degenerate rectangular pair (dark gray).}\label{fig:seq_ex}
\end{figure}

We now recall how one obtains the structure of morphisms and short exact sequences from the quiver $\Gamma(Q)$.

\begin{lem}\label{lem:hom}\cite[Sec.~3.1.4.1]{schiffler}
    Let $M$ and $N$ be indecomposable representations. Then $\Hom(M,N) \cong K$ if and only if the following equivalent conditions hold. Otherwise, $\Hom(M,N) = 0$.
    \begin{enumerate}
        \item $(M,N)$ is a rectangular pair.
        \item $N \in \mathcal{R}_{\rightarrow}(M)$.
        \item $M \in \mathcal{R}_{\leftarrow}(N)$.
    \end{enumerate}
\end{lem}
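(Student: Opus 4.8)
The statement has two parts: the dimension count $\Hom(M,N) \cong K$ or $0$, and the equivalence of the three combinatorial conditions characterizing when the nonzero case occurs. The plan is to first establish the equivalence (1) $\Leftrightarrow$ (2) $\Leftrightarrow$ (3), which is essentially a reformulation of the definitions, and then prove that these conditions are equivalent to $\Hom(M,N) \neq 0$ (with the space being one-dimensional in that case).

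First I would unwind the definitions to get (1) $\Leftrightarrow$ (2) $\Leftrightarrow$ (3). Recall that $\cR_\rightarrow(M)$ was defined as the set of indecomposables lying in the slanted rectangular region whose left boundary is $\Sigma_\nearrow(M) \cup \Sigma_\searrow(M)$; the ``right'' boundary of that region is determined by where the two rays terminate against the walls $y = n$ and $y = 1$, together with the corays through those endpoints. Concretely, $N \in \cR_\rightarrow(M)$ precisely when $\Sigma_\nearrow(M)$ meets $\Sigma_\nwarrow(N)$ in a lattice point supporting a module $E_1$ and $\Sigma_\searrow(M)$ meets $\Sigma_\swarrow(N)$ in a lattice point supporting a module $E_2$ — which is exactly the definition of a rectangular pair. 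This gives (1) $\Leftrightarrow$ (2). The equivalence (1) $\Leftrightarrow$ (3) is the mirror-image statement, using $\cR_\leftarrow(N)$ and the fact that ``$\Sigma_\nearrow(M)$ meets $\Sigma_\nwarrow(N)$'' and ``$\Sigma_\searrow(M)$ meets $\Sigma_\swarrow(N)$'' are symmetric in the roles of $M$ and $N$. These steps are bookkeeping with the geometry of $\Gamma(Q)$ and should be short.

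For the main content — that $\Hom(M,N) \cong K$ iff $(M,N)$ is a rectangular pair, and is $0$ otherwise — I would invoke the standard description of the module category of a path algebra of type $\mathbb{A}_n$ in terms of the combinatorics of $\Gamma(Q)$, following \cite[Section~3.1.4.1]{schiffler}. The cleanest route is via sectional paths and the mesh relations: a nonzero morphism $M \to N$, when it exists, is unique up to scalar, and its existence is equivalent to the existence of a ``staircase'' of irreducible maps from $M$ to $N$ respecting the mesh relations. One shows that such a composite is nonzero precisely when $N$ lies in the forward hammock $\cR_\rightarrow(M)$: the ray $\Sigma_\nearrow(M)$ and coray $\Sigma_\nwarrow(N)$ must actually intersect inside the quiver (if the ray hits the wall $y=n$ first, the relevant composite of irreducible maps factors through zero), and symmetrically for $\Sigma_\searrow(M)$ and $\Sigma_\swarrow(N)$. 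That the Hom-space is at most one-dimensional follows because all paths in $\Gamma(Q)$ from $M$ to $N$ are identified (up to scalar) modulo the mesh ideal in the type $\mathbb{A}$ case, where the mesh category is equivalent to $\mods KQ$. I would phrase this as a citation to \cite{schiffler} with the bijection between hammocks and nonzero morphisms made explicit, rather than reproving the mesh-category equivalence.

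The main obstacle is matching the precise shape of the hammock $\cR_\rightarrow(M)$ — with its deleted or undeleted corners dictated by whether rays reach the walls $y=1$ and $y=n$ — to the four-part casework in Definition~\ref{defn:rects}, and making sure the ``module exists at the intersection point'' clauses line up exactly with the geometric picture in Figure~\ref{fig:seq_ex}; a careless reader could conflate the rectangular-pair condition with the (strictly larger) condition that the two rays and two corays merely meet as lines in $\mathbb{R}^2$. I would handle this by a short explicit argument: the intersection point of $\Sigma_\nearrow(M)$ and $\Sigma_\nwarrow(N)$ lies strictly between the lines $y=1$ and $y=n$ exactly when a module sits there, and this is the condition that obstructs the morphism from being killed. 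Once the hammock is correctly identified, the lemma follows from the cited structure theory.
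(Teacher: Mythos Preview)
The paper does not give a proof of this lemma: it is stated as recalled background with a citation to \cite[Section~3.1.4.1]{schiffler}, and the surrounding section explicitly says ``the results of this section are well-known.'' Your proposal is a correct sketch of how the cited argument goes---the equivalence of (1)--(3) by unwinding the hammock definitions, followed by the mesh-category description of morphisms in type~$\mathbb{A}$---and matches the treatment in \cite{schiffler}; but since the paper's ``proof'' is simply the citation, your write-up does more than the paper asks for here.
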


As a consequence of Lemma~\ref{lem:hom}, we obtain the following.

\begin{prop}\label{prop:factor}
    Let $M, N$ and $L$ be indecomposable representations and suppose there are nonzero morphisms $f: M \rightarrow N$ and $g: N \rightarrow L$. Then every morphism $M \rightarrow L$ factors through both $f$ and $g$.
\end{prop}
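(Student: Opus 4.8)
\textbf{Proof plan for Proposition~\ref{prop:factor}.}

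The plan is to use Lemma~\ref{lem:hom} repeatedly, together with the fact that all nonzero Hom-spaces between indecomposables are one-dimensional. First I would observe that the hypotheses give $\Hom(M,N)\neq 0$ and $\Hom(N,L)\neq 0$, so in particular $gf\colon M\to L$ is a candidate nonzero morphism; the key preliminary point is that $gf\neq 0$. This is not automatic in an arbitrary category, but in type $\mathbb{A}$ one can see it as follows: by Lemma~\ref{lem:hom}, $N\in\cR_\rightarrow(M)$ and $L\in\cR_\rightarrow(N)$. One then checks, from the description of the regions $\cR_\rightarrow(-)$ in terms of the rays $\Sigma_\nearrow,\Sigma_\searrow$, that $\cR_\rightarrow(N)\subseteq\cR_\rightarrow(M)$ whenever $N\in\cR_\rightarrow(M)$ (the rectangular region attached to $N$ sits inside the one attached to $M$), so $L\in\cR_\rightarrow(M)$ and hence $\Hom(M,L)\cong K$. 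Now $gf$ lies in this one-dimensional space, and one rules out $gf=0$ by a standard support/composition-of-irreducibles argument: $f$ and $g$ can each be written as composites of irreducible maps along sectional-type paths through the rectangular regions, and the composite of the corresponding path from $M$ to $L$ is nonzero because it avoids any mesh relation (equivalently, $gf$ is nonzero on the appropriate basis vector of the representation $M$). This is the step I expect to be the main obstacle, since it is exactly the point where the combinatorics of $\Gamma(Q)$ has to be invoked carefully rather than formally.

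Once $gf\neq 0$ is established, the factorization statement is almost immediate. Let $h\colon M\to L$ be an arbitrary morphism. Since $\Hom(M,L)\cong K$ and $gf\neq 0$, we have $h=\lambda\,(gf)$ for some scalar $\lambda\in K$. Then $h=g\circ(\lambda f)$ exhibits $h$ as factoring through $f$, and $h=(\lambda g)\circ f$ — wait, that is the same factorization; to factor through $g$ we instead write $h=\lambda\,(g\circ f)=g\circ(\lambda f)$ for factoring through $g$ on the left, and $h=\lambda(gf)=(\lambda g)f$... let me restate this cleanly: $h=\lambda (gf)=g\circ(\lambda f)$ factors $h$ through $g$, and $h=\lambda(gf)=(\lambda g)\circ f$ factors $h$ through $f$. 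Since $\lambda f\colon M\to N$ and $\lambda g\colon N\to L$ are morphisms in $\mods KQ$, this is exactly the claim.

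A cleaner way to package the argument, which I would probably adopt to avoid the $gf\neq 0$ technicality appearing twice, is: first prove the lemma that $N\in\cR_\rightarrow(M)$ and $L\in\cR_\rightarrow(N)$ imply $L\in\cR_\rightarrow(M)$ and that the composite of the generators of $\Hom(M,N)$ and $\Hom(N,L)$ generates $\Hom(M,L)$ — this is the transitivity of nonzero morphisms along the AR quiver, which is standard and could even be cited from \cite[Section~3.1.4.1]{schiffler}. Then the proposition follows formally by the one-dimensionality of all three Hom-spaces. If the paper prefers to keep things self-contained, the support-vector computation sketched above suffices; if citing \cite{schiffler} is acceptable, the whole proof collapses to two lines.
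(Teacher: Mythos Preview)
Your overall strategy---reduce to a statement about the composite $gf$ and then use one-dimensionality of the Hom-spaces---is exactly right, and it matches the paper's approach, which gives no argument beyond declaring the result a consequence of Lemma~\ref{lem:hom}. However, there is a genuine error in your key preliminary claim.

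It is \emph{not} true that $gf \neq 0$ in general, and correspondingly it is \emph{not} true that $\cR_\rightarrow(N) \subseteq \cR_\rightarrow(M)$ whenever $N \in \cR_\rightarrow(M)$. For a concrete counterexample, take $Q$ linearly oriented of type $\mathbb{A}_5$ and interval modules $M = M_{[3,5]}$, $N = M_{[1,4]}$, $L = M_{[1,2]}$. There are nonzero maps $f\colon M \to N$ and $g\colon N \to L$, but $\Hom(M,L) = 0$, so $gf = 0$ and $L \notin \cR_\rightarrow(M)$ even though $L \in \cR_\rightarrow(N)$. Your sectional-path argument for $gf \neq 0$ cannot succeed here either: the relevant path from $M$ to $N$ already passes through a mesh (e.g.\ $M_{[3,5]} \to M_{[3,4]} \to M_{[2,4]} = \tau^{-1}M_{[3,5]}$), so it is not sectional.

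The fix is minor but necessary. What you actually need is the implication $gf = 0 \Rightarrow \Hom(M,L) = 0$ (equivalently: if $\Hom(M,L) \neq 0$ then $gf \neq 0$). Indeed, since $\Hom(M,N) = Kf$ and $\Hom(N,L) = Kg$, a morphism $h\colon M \to L$ factors through $f$ iff $h = (\mu g)f$ for some $\mu$, and factors through $g$ iff $h = g(\lambda f)$ for some $\lambda$; both say precisely $h \in K\cdot gf$. So the proposition is equivalent to $\Hom(M,L) = K\cdot gf$, which is automatic if $gf \neq 0$ and reduces to $\Hom(M,L) = 0$ if $gf = 0$. This weaker statement is the standard type~$\mathbb{A}$ fact (composition of nonzero maps between indecomposables is nonzero whenever the target Hom-space is nonzero) that can be read off the interval description or cited from \cite[Section~3.1.4]{schiffler}. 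Once you phrase it this way, your proof goes through.
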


The next result explains the structure of short exact sequences.

\begin{lem}\label{lem:ext}\cite[Section~3.1.4.3]{schiffler}\textnormal{ (see also \cite[Proposition~6.5]{BGMS})} Let $M$ and $N$ be indecomposable representations.
\begin{enumerate}
    \item Suppose that $(M,N)$ is a non-degenerate rectangular pair. Let $E_1$ be the module at $\Sigma_{\nearrow}(M)\cap\Sigma_{\nwarrow}(N)$ and let $E_2$ be the module at $\Sigma_{\searrow}(M) \cap \Sigma_{\swarrow}(N)$. Then $\Ext^1(N,M) \cong K$ and is spanned by a (diamond) short exact sequence
    $$0 \rightarrow M \rightarrow E_1 \oplus E_2 \rightarrow N \rightarrow 0.$$
    \item Suppose that $(M,N)$ is an upper or lower deleted pair. Let $E$ be the module at $\Sigma_{\nearrow}(M)\cap\Sigma_{\nwarrow}(N)$ (lower case) or $\Sigma_{\searrow}(M) \cap \Sigma_{\swarrow}(N)$ (upper case). Then $\Ext^1(N,M) \cong K$ and is spanned by a short exact sequence
    $$0 \rightarrow M \rightarrow E \rightarrow N \rightarrow 0.$$
    \item Suppose that $(M,N)$ is not a non-degenerate rectangular pair, an upper deleted pair, or a lower deleted pair. Then $\Ext^1(N,M) = 0$.
\end{enumerate}
\end{lem}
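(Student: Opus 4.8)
The plan is to prove Lemma~\ref{lem:ext} by carefully combining the combinatorial description of $\Hom$-spaces from Lemma~\ref{lem:hom} with the standard theory of Auslander--Reiten sequences and the fact that $\dim_K \Ext^1(N,M) \le 1$ for indecomposables in type $\mathbb{A}$. I will treat the three cases separately, but all three rest on the same engine: identifying a short exact sequence $0 \to M \to E' \to N \to 0$ whose connecting data is forced by the position of $M$ and $N$ in $\Gamma(Q)$, and then checking non-splitness via Lemma~\ref{lem:hom}.

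\textbf{Case (1): non-degenerate rectangular pairs.} First I would observe that since $(M,N)$ is a rectangular pair, Lemma~\ref{lem:hom} gives a nonzero map $f \colon M \to N$, unique up to scalar. I would factor $f$ through its image: actually the cleaner route is to build the middle term directly from the rectangle. The modules $E_1$ at $\Sigma_{\nearrow}(M) \cap \Sigma_{\nwarrow}(N)$ and $E_2$ at $\Sigma_{\searrow}(M) \cap \Sigma_{\swarrow}(N)$ each receive a nonzero map from $M$ (they lie in $\cR_\rightarrow(M)$) and each map nonzero to $N$ (since $N \in \cR_\rightarrow(E_i)$, which one reads off the rectangle). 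I would then write down the candidate sequence $0 \to M \xrightarrow{(g_1,g_2)} E_1 \oplus E_2 \xrightarrow{(h_1,-h_2)} N \to 0$ and verify exactness by a dimension-vector count: in type $\mathbb{A}$ each indecomposable is a ``interval'' module, and the interval of $M$ is the disjoint-or-overlapping union appropriately of those of $E_1, E_2$ with the interval of $N$ removed — this is exactly the geometry encoded by the slanted rectangle. Non-splitness follows because if it split we would need $M$ to be a summand of $E_1 \oplus E_2$, impossible since $M \ne E_i$ (non-degeneracy is what guarantees $N$, hence $E_1, E_2$, are genuinely distinct from $M$) and the $E_i$ are indecomposable. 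Finally $\Ext^1(N,M) \cong K$ because the dimension is at most one and we have exhibited a nonzero element; that this sequence is a ``diamond'' is immediate from $E_1 \not\cong E_2$, which again is read off the non-degenerate rectangle.

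\textbf{Cases (2) and (3): deleted pairs and the vanishing.} For an upper or lower deleted pair, the argument is parallel but now the ``deleted corner'' is precisely the place where the naive middle term would have split off a summand equal to $M$ (or to something that cancels). Concretely, in the lower deleted case there is $E$ at $\Sigma_{\nearrow}(M) \cap \Sigma_{\nwarrow}(N)$, and the condition that $\tau^{-1}L$ (for $L$ at the end of $\Sigma_{\searrow}(M)$) sits at the end of $\Sigma_{\swarrow}(N)$ is exactly the numerical condition that makes $0 \to M \to E \to N \to 0$ exact with indecomposable middle term; I would verify this by the same interval/dimension-vector bookkeeping, using that $\Hom(M,E) \ne 0 \ne \Hom(E,N)$ from Lemma~\ref{lem:hom}. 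Non-splitness is automatic since $E$ is indecomposable and $E \ne M$, $E \ne N$. For part (3), I would argue contrapositively: a nonsplit $0 \to M \to E' \to N \to 0$ with $M, N$ indecomposable has $E'$ either indecomposable or a sum of two non-isomorphic indecomposables (the type-$\mathbb{A}$ fact quoted in the introduction), and in either case, pushing the embedding $M \hookrightarrow E'$ and the surjection $E' \twoheadrightarrow N$ through Lemma~\ref{lem:hom} forces $M$ and $N$ to sit in one of the three configurations; so if $(M,N)$ is in none of them, no such sequence exists and $\Ext^1(N,M) = 0$. Alternatively, and perhaps more cleanly, I would invoke the representation-finite classification: $\dim_K \Ext^1(N,M)$ equals the number of arrows from the relevant vertices in the AR quiver of the relevant module, and enumerate; but the hom-theoretic argument is self-contained given what the excerpt has set up.

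\textbf{Main obstacle.} The real work — and the step I expect to be fiddliest — is the exactness verification of the explicit candidate sequences: matching the map $M \to E_1 \oplus E_2$ (resp. $M \to E$) with the correct signs so that the composite to $N$ vanishes, and confirming that kernel and cokernel are as claimed rather than merely that the dimension vectors add up. The slick way around grinding through matrices is to use the mesh/knitting structure of $\Gamma(Q)$: the maps $M \to E_i$ and $E_i \to N$ are compositions of irreducible maps along sectional paths, the relevant rectangle is a ``commutative'' region of the AR quiver (Proposition~\ref{prop:factor} controls the factorizations), and the alternating sum of irreducible maps around the boundary of the slanted rectangle is precisely a (possibly iterated) mesh relation, which forces the composite $M \to E_1 \oplus E_2 \to N$ to be zero and the sequence to be exact. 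In the deleted-pair case the same picture applies with one corner of the rectangle amputated, which is what collapses the two-term middle to a single indecomposable. Making this mesh argument precise — rather than hand-wavy — is where I would spend the bulk of the proof, citing \cite[Section~3.1.4]{schiffler} for the knitting facts and \cite[Proposition~6.5]{BGMS} for the parallel computation.
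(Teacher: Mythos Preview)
The paper does not prove this lemma: it is stated with citations to \cite[Section~3.1.4.3]{schiffler} and \cite[Proposition~6.5]{BGMS}, and the text moves directly to Remark~\ref{rem:diamond_factor}. Your proposal is a sound reconstruction of the argument in those sources---build the candidate sequence from the corners of the (possibly deleted) rectangle, verify it is a complex using the one-dimensionality of $\Hom(M,N)$ from Lemma~\ref{lem:hom} and Proposition~\ref{prop:factor}, and check exactness and non-splitness. The step you flag as the main obstacle is indeed the only one with real content: note that the dimension-vector identity together with the complex condition is not by itself enough for exactness without separately establishing injectivity of $(g_1,g_2)$ and surjectivity of $(h_1,-h_2)$. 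In type~$\mathbb{A}$ this is most cleanly done via the explicit interval description of indecomposables rather than by iterating mesh relations; alternatively one can sidestep the explicit middle-term construction by invoking the Auslander--Reiten duality $\Ext^1(N,M) \cong D\Hom(\tau^{-1}M,N)$ together with Lemma~\ref{lem:hom} to compute $\dim_K\Ext^1(N,M)$, and then identify the generating extension by pulling back the Auslander--Reiten sequence ending in~$N$.
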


\begin{rem}\label{rem:diamond_factor}
    In the setting of Lemma~\ref{lem:ext}(1), Proposition~\ref{prop:factor} implies that the induced compositions $M \rightarrow E_1 \rightarrow N$ and $M \rightarrow E_2 \rightarrow N$ must both be nonzero.
\end{rem}

\begin{ex}\label{ex:homs_and_exts}
Consider the quiver $\Gamma(Q)$ in Figure~\ref{fig:AR_ex}.
    \begin{enumerate}
        \item There is a degenerate rectangular pair $\left(P(2),{\scriptsize \begin{matrix}3\\2\end{matrix}}\right)$. There is a nonzero morphism  ${P(2) \rightarrow \scriptsize \begin{matrix}3\\2\end{matrix}}$  and $\Ext^1\left({\scriptsize \begin{matrix}3\\2\end{matrix}},P(2)\right) = 0$.
        \item There is a rectangular pair $(P(4), I(4))$. There is a nonzero morphism $P(4) \rightarrow I(4)$ and a diamond short exact sequence
        $$0 \rightarrow P(4) \rightarrow I(5) \oplus 4 \rightarrow I(4).$$
        In this case, we have that $\cR_\rightarrow(P(4)) = \cR_\leftarrow(I(4))$, and this consists of the eight representations in the rectangle whose vertices are $P(4), I(4)$, $4 = \Sigma_{\nearrow}(P(4)) \cap \Sigma_{\nwarrow}(I(4))$, and $I(5) = \Sigma_{\searrow}(P(4)) \cap \Sigma_{\swarrow}(I(4))$.
        \item There is an upper deleted pair $(4, I(3))$. There is a nonsplit short exact sequence
        $$0 \rightarrow 4 \rightarrow I(4) \rightarrow I(3) \rightarrow 0$$
        and $\Hom(4, I(3)) = 0$.
    \end{enumerate}
\end{ex}

In Example~\ref{ex:homs_and_exts}(2) above, we have that $\cR_\rightarrow(P(4))$ has the shape of a rectangle in the Auslander-Reiten quiver $\Gamma(Q)$. As observed at the end of \cite[Section~3.1.4.1]{schiffler}, this happens precisely because $P(4)$ is projective. A consequence of this observation is the following.

\begin{prop}\label{prop:proj_hammock}
    Let $E_1 \in \cL_1$ and $E_2 \in \cL_n$. Suppose that there is a module $M$ at the intersection $\Sigma_{\nwarrow}(E_1) \cap \Sigma_{\swarrow}(E_2)$ and that there is a module $N$ at the intersection $\Sigma_{\nearrow}(E_1) \cap \Sigma_{\searrow}(E_2)$. Then the following hold.
    \begin{enumerate}
        \item There exists $i \in [n]$ such that $M = P(i)$ and $N = I(i)$.
        \item Either $\{M,N\} = \{E_1,E_2\}$ or there is a diamond short exact sequence
        $$0 \rightarrow M \rightarrow E_1 \oplus E_2 \rightarrow N \rightarrow 0.$$
    \end{enumerate}
\end{prop}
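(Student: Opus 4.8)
The plan is to translate the hypotheses into the geometry of $\Gamma(Q)$, recognise the hammocks of $M$ and $N$ as one slanted rectangle with corners $M$, $E_1$, $E_2$, $N$, and then apply the observation recalled just before the statement: such a hammock is a rectangle exactly when the module is projective. First I would reformulate the hypotheses using the equivalences $X \in \Sigma_{\nwarrow}(Y) \Leftrightarrow Y \in \Sigma_{\searrow}(X)$ (and the three analogues). From $M \in \Sigma_{\nwarrow}(E_1) \cap \Sigma_{\swarrow}(E_2)$ and $N \in \Sigma_{\nearrow}(E_1) \cap \Sigma_{\searrow}(E_2)$ this gives $E_1 \in \Sigma_{\searrow}(M) \cap \Sigma_{\swarrow}(N)$ and $E_2 \in \Sigma_{\nearrow}(M) \cap \Sigma_{\nwarrow}(N)$. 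Since $\Gamma(Q)$ is confined to the strip $1 \le y \le n$ with $\cL_1$ on the line $y=1$ and $\cL_n$ on the line $y=n$, a ray or coray of $\Gamma(Q)$ that meets one of these two lines must end there; hence the module $E_1 \in \cL_1$ is the endpoint of both $\Sigma_{\searrow}(M)$ and $\Sigma_{\swarrow}(N)$, and $E_2 \in \cL_n$ the endpoint of both $\Sigma_{\nearrow}(M)$ and $\Sigma_{\nwarrow}(N)$. In particular $(M,N)$ is a rectangular pair, so Lemma~\ref{lem:hom} yields $N \in \cR_\rightarrow(M)$ and $M \in \cR_\leftarrow(N)$.

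Next I would show that $\cR_\rightarrow(M)$ is exactly the slanted rectangle $R$ with corners $M$ (left), $E_2$ (top), $E_1$ (bottom), and $N$ (right), the corner $N = \Sigma_{\nearrow}(E_1) \cap \Sigma_{\searrow}(E_2)$ being opposite $M$. The left boundary of $\cR_\rightarrow(M)$ is $\Sigma_{\nearrow}(M) \cup \Sigma_{\searrow}(M)$, whose arms run inside $\Gamma(Q)$ from $M$ to $E_2$ on $y = n$ and from $M$ to $E_1$ on $y = 1$; since $\Gamma(Q)$ is connected along rays and corays and confined to the strip, a slanted rectangular region with this left boundary must be the parallelogram determined by $M, E_1, E_2$, whose fourth corner is $N$ and whose edges all lie in $\Gamma(Q)$ (the two left arms terminate at $E_1, E_2$, and $N$ is a module on $\Sigma_{\nearrow}(E_1)$ and on $\Sigma_{\searrow}(E_2)$ by hypothesis). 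Running the symmetric argument for $N$, with $E_1, E_2$ the endpoints of $\Sigma_{\swarrow}(N), \Sigma_{\nwarrow}(N)$, gives $\cR_\leftarrow(N) = R$ too. Now by the observation of \cite[Section~3.1.4.1]{schiffler}, $\cR_\rightarrow(M)$ being a rectangle forces $M$ to be projective, say $M = P(i)$, and then $\cR_\rightarrow(P(i)) = \cR_\leftarrow(I(i))$; comparing right corners ($N$ for $\cR_\rightarrow(M) = R$, and $I(i)$ for $\cR_\leftarrow(I(i))$, whose right boundary is $\Sigma_{\swarrow}(I(i)) \cup \Sigma_{\nwarrow}(I(i))$) gives $N = I(i)$. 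This proves (1).

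For (2), suppose $\{M,N\} \ne \{E_1, E_2\}$. Then the rectangular pair $(M,N)$ cannot be degenerate: if $N$ lay on $\Sigma_{\nearrow}(M) \cup \Sigma_{\searrow}(M)$, then (using $E_2 \in \Sigma_{\nearrow}(M)$ with $N \in \Sigma_{\searrow}(E_2)$, or the mirror statements involving $E_1$) $N$ would be the unique intersection point of a slope-$1$ and a slope-$(-1)$ line, forcing $N = E_2$ and then $M = E_1$ (or $N = E_1$, $M = E_2$), contrary to assumption. So $(M,N)$ is a non-degenerate rectangular pair whose defining modules at $\Sigma_{\nearrow}(M) \cap \Sigma_{\nwarrow}(N)$ and $\Sigma_{\searrow}(M) \cap \Sigma_{\swarrow}(N)$ are $E_2$ and $E_1$; Lemma~\ref{lem:ext}(1) then produces the diamond short exact sequence $0 \to M \to E_1 \oplus E_2 \to N \to 0$. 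The step I expect to be most delicate is the identification $\cR_\rightarrow(M) = R$: it uses the convexity of $\Gamma(Q)$ along rays and corays in type $\mathbb{A}$ and a careful match with the cited hammock observation, and the degenerate cases $M = E_1$ or $M = E_2$ (where $R$ collapses to a strip-spanning segment) should be checked to fall under the same statement.
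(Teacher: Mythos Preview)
Your proposal is correct and follows precisely the approach the paper indicates: the proposition is stated there without proof, merely as ``a consequence of this observation'' (namely that $\cR_\rightarrow(M)$ has rectangular shape exactly when $M$ is projective, from \cite[Section~3.1.4.1]{schiffler}), and you have supplied the details of that deduction. The identification $\cR_\rightarrow(P(i)) = \cR_\leftarrow(I(i))$ and the analysis of the degenerate case in (2) are exactly what is needed; your own flag that the step $\cR_\rightarrow(M) = R$ is the delicate one is accurate, and the convexity of $\Gamma(Q)$ along rays, corays, and $\tau$-orbits in type $\mathbb{A}$ does justify it.
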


In words, Proposition~\ref{prop:proj_hammock} above says that any rectangle in $\Gamma(Q)$ which touches both the upper boundary $\cL_n$ and lower boundary $\cL_1$ must start at a projective and end at an injective. The following shows a sort of converse, namely that the rectangle connecting an indecomposable projective to the corresponding injective will always touch both the upper and lower boundary.

\begin{lem}\label{lem:proj_hammock_2}
    Let $i \in [n]$. Let $E_1$ and $E_2$ be the modules lying at the ends of $\Sigma_{\searrow}(P(i))$ and $\Sigma_{\nearrow}(P(i))$, respectively. Then $E_1 \in \cL_1$ and $E_2 \in \cL_n$.
\end{lem}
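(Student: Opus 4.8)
The statement is proved by a direct analysis of the sectional ray $\Sigma_\searrow(P(i))$ inside the embedded Auslander--Reiten quiver $\Gamma(Q)\subseteq\mathbb Z\times[n]$. First I would reduce to a single assertion: the substitution $y\mapsto n+1-y$ reflects $\Gamma(Q)$ onto $\Gamma(Q')$, where $Q'$ is the quiver obtained from $Q$ by relabelling the vertices in reverse order, and under this reflection $\cL_1\leftrightarrow\cL_n$ while the direction $(1,-1)$ is exchanged with $(1,1)$. Hence the assertion ``$E_2\in\cL_n$'' for $Q$ is exactly the assertion ``$E_1\in\cL_1$'' applied to $Q'$, so it suffices to prove $E_1\in\cL_1$. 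Since $\cL_1$ is by definition the bottom row $y=1$ of $\Gamma(Q)$, and $\Sigma_\searrow(P(i))$ is the (necessarily finite) sectional ray that starts at $P(i)$ and walks along irreducible maps in the direction $(1,-1)$, what must be shown is that this ray does not terminate before reaching the line $y=1$.

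The plan for the main step is to combine the combinatorics of $\cR_\rightarrow(P(i))$ with a description of $\cL_1$. By Lemma~\ref{lem:hom}, $\cR_\rightarrow(P(i))=\{N:\Hom(P(i),N)\neq 0\}$, and $\Hom(P(i),N)$ is the vector space at the vertex $i$ of the representation $N$; thus $\cR_\rightarrow(P(i))$ is precisely the set of indecomposables supported at $i$. As recalled in Section~\ref{sec:AR_quivers} (the observation following Example~\ref{ex:homs_and_exts}), because $P(i)$ is projective this set has the shape of an honest slanted rectangle in $\Gamma(Q)$ with $P(i)$ at a corner, the two sides through $P(i)$ being initial segments of $\Sigma_\nearrow(P(i))$ and $\Sigma_\searrow(P(i))$. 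Consequently $\cR_\rightarrow(P(i))$ occupies a contiguous band of rows $y_{\min}\le\cdots\le y_{\max}$ with $y_{\min}\le y(P(i))\le y_{\max}$, and $\Sigma_\searrow(P(i))$ reaches the bottom row exactly when $y_{\min}=1$, i.e.\ exactly when \emph{some} indecomposable supported at $i$ lies in $\cL_1$. So it remains only to produce a module of $\cL_1$ whose support contains $i$. For this I would prove the standard structural fact that the supports of the modules of $\cL_1$ form a partition of $\{1,\dots,n\}$ into intervals --- knitting the bottom row of $\Gamma(Q)$ from $P(1)$, one checks that every module of $\cL_1$ is an interval module and that $\tau^{-1}$ carries it to the interval module supported on the next block, the blocks being cut out by the arrows of $Q$ of the form $j{+}1\to j$. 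Some block contains $i$, giving the required module; applying the same reasoning to $Q'$ (equivalently, partitioning at the arrows $j\to j{+}1$ and reading off $\cL_n$) settles $E_2$.

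I expect the genuinely delicate part to be precisely these two structural inputs --- that $\cR_\rightarrow(P(i))$ is an untruncated rectangle, so that $\Sigma_\searrow(P(i))$ really forms its left edge all the way down to $y_{\min}$, and that the supports of $\cL_1$ cover every vertex; once they are available the rest is bookkeeping. If one prefers a self-contained argument that bypasses the rectangle input, one can instead show by induction along the ray that every $N$ on $\Sigma_\searrow(P(i))$ with $y(N)>1$ admits a $\searrow$-successor, so the ray can only halt on $y=1$. The case $N$ not injective is routine: $\tau^{-1}N$ exists, and the mesh of the almost split sequence ending at $\tau^{-1}N$ forces an indecomposable directly $\searrow$ of $N$ (here one uses that the indecomposables in any fixed row of $\Gamma(Q)$ form a single unbroken segment whose endpoints shift by one from row to row). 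The crux is the case $N=I(j)$ injective with $y(N)>1$: here $\tau^{-1}N$ is undefined, so one must exploit that $N$ lies on a sectional path out of the \emph{projective} $P(i)$ --- concretely, $\Hom(P(i),I(j))\neq 0$ forces $I(j)$ to be supported at $i$, and a short examination of the right-boundary staircase of $\Gamma(Q)$ (an injective with no $\searrow$-successor is a place where that staircase steps back) shows this is incompatible with $P(i)$ lying on $\Sigma_\nwarrow(I(j))$ unless $y(I(j))=1$. It is exactly this ``the ray out of a projective is never walled off early by an injective'' step that needs care.
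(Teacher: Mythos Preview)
Your argument is correct, but it takes a different route from the paper's. The paper argues by induction on $i$: the base case is $E_1=P(1)\in\cL_1$, and for the step one compares the rays out of $P(i)$ and $P(i-1)$ via the irreducible map between them --- if $P(i)\hookrightarrow P(i-1)$ then $\Sigma_\searrow(P(i))$ passes through $P(i-1)$ and hence ends at the same module $F\in\cL_1$ as $\Sigma_\searrow(P(i-1))$; if instead $P(i-1)\hookrightarrow P(i)$ then $\Hom(P(i),F)=0$ while $\Hom(P(i-1),F)\neq 0$ forces $F$ to be non-injective, whence $E_1=\tau^{-1}F\in\cL_1$. Your main approach instead combines two structural inputs: the rectangle shape of $\cR_\rightarrow(P(i))$ (which the paper cites from \cite{schiffler} but does not use in its own proof of this lemma) and the fact that the supports of the modules in $\cL_1$ partition $[n]$. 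The paper's induction is shorter and fully self-contained, whereas your argument is more conceptual --- it explains \emph{why} the ray reaches $\cL_1$, namely because some bottom-row module is supported at $i$ --- at the cost of importing those two external lemmas. Your alternative ``induction along the ray'' is closer in spirit to the paper's proof but inducts on a different variable; as you correctly flag, the case of an injective encountered mid-ray is the delicate one, and the paper's choice to induct on $i$ sidesteps that issue entirely.
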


\begin{proof} 
   We prove only that $E_1 \in \cL_1$, the proof that $E_2 \in \cL_n$ being similar. We proceed by induction on $i$. For the base case $i = 1$, we have $E_1 = P(1) \in \cL_1$. For the induction step, let $F$ be the module lying at the end of $\Sigma_{\searrow}(P(i-1))$ and assume $F \in \cL_1$. We have two cases to consider.

    First suppose there is an irreducible morphism $P(i) \hookrightarrow P(i-1)$. Then $P(i-1)$, and thus also $F$, lies on $\Sigma_{\searrow}(P(i))$. This implies that $E_1 = F \in \cL_1$.

    Now suppose instead that there is an irreducible morphism $P(i-1) \hookrightarrow P(i)$. Thus $P(i)$ lies in the direction $(1,1)$ from $P(i-1)$. By Lemma~\ref{lem:hom}, this means
    $\Hom(P(i),F) = 0$. Then since $\Hom(P(i-1),F) \neq 0$ and there is an injection $P(i-1) \hookrightarrow P(i)$, this means $F$ is not injective.
    It follows that $E_1 = \tau^{-1} F \in \cL_1$.
    See Figure~\ref{fig:bndrylim} for an illustration.
\end{proof}

\begin{figure}
{\small
\[
\begin{tikzpicture}
[inner sep=0.5mm]
\draw (0,0) -- (2,-2);
\draw (0.25,0.25) -- (2.5,-2);
\draw (0,0) -- (0.25,0.25);
\draw [dashed] (-1,-2) -- (5,-2);
\node at (5.5,-2) {$y = 1$};
\node at (-0.75,0) {$P(i-1)$};
\node at (0.25,0.5) {$P(i)$};
 \node at (0,0) [shape=circle,draw,fill] {};
  \node at (0.25,0.25) [shape=circle,draw,fill] {};
   [shape=circle,draw,fill] {};
 \node at (2,-2) [shape=circle,draw,fill] {};
  \node at (2.5,-2) [shape=circle,draw,fill] {};
 \node at (1.75,-2.25) {$F$};
  \node at (3.25,-2.25) {$E_1 = \tau^{-1}F$};
  \end{tikzpicture}
\]}
\caption{Schematic diagram of the proof of Lemma~\ref{lem:proj_hammock_2}.}\label{fig:bndrylim}.
\end{figure}
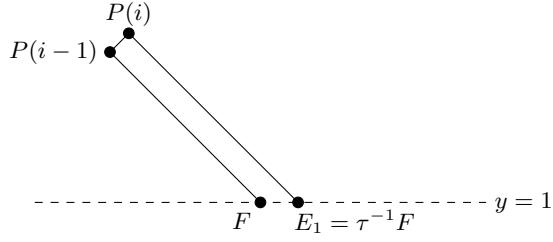

\begin{rem}\label{rem:proj_hammock}
    For $1 < i < n$ and $E_1, E_2$ as in Lemma~\ref{lem:proj_hammock_2}, the well-known description of $\cR_\rightarrow(P(i))$ implies that there is a diamond short exact sequence
    $$0 \rightarrow P(i) \rightarrow E_1 \oplus E_2 \rightarrow I(i) \rightarrow 0.$$
    See e.g. \cite[Section~3.1.4]{schiffler}.
\end{rem}

We conclude this section by tabulating the following consequence of Lemma~\ref{lem:ext}.

\begin{prop}\label{prop:rel_proj}
    Let $M$ be an indecomposable representation.
    \begin{enumerate}
        \item There exists an indecomposable $N$ such that $\Ext^1(M,N)$ contains a diamond exact sequence if and only if $M \notin \proj(KQ) \cup \cL_1 \cup \cL_n$.
        \item There exists an indecomposable $N$ such that $\Ext^1(N,M)$ contains a diamond exact sequence if and only if $M \notin \inj(KQ) \cup \cL_1 \cup \cL_n$.
    \end{enumerate}
\end{prop}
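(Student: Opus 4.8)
The plan is to prove both statements simultaneously using the trichotomy in Lemma~\ref{lem:ext}, since (2) is exactly the statement obtained from (1) by reversing all arrows and replacing $\tau^{-1}$ by $\tau$, i.e. by passing to the opposite algebra; so I would prove (1) carefully and remark that (2) follows by duality. For (1), the key observation is that a diamond exact sequence $0 \to N \to E_1 \oplus E_2 \to M \to 0$ exists with $N$ indecomposable if and only if $(N,M)$ is a \emph{non-degenerate} rectangular pair, by Lemma~\ref{lem:ext}(1) together with the fact (Lemma~\ref{lem:ext}(2)--(3)) that deleted pairs give sequences with indecomposable middle term and all other pairs give $\mathrm{Ext}^1 = 0$. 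So the task reduces to: there exists an indecomposable $N$ forming a non-degenerate rectangular pair $(N,M)$ if and only if $M \notin \proj(KQ) \cup \cL_1 \cup \cL_n$.

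For the ``only if'' direction, suppose $(N,M)$ is a non-degenerate rectangular pair. Unwinding Definition~\ref{defn:rects}(1)--(2): $M$ lies strictly inside the rectangle $\cR_\rightarrow(N) = \cR_\leftarrow(M)$, and in particular $M$ lies at the intersection $\Sigma_{\nearrow}(N) \cap \Sigma_{\searrow}(N)$-translate that is the ``far corner'' — more precisely, the corays $\Sigma_{\nwarrow}(M)$ and $\Sigma_{\swarrow}(M)$ both meet the rays from $N$, and non-degeneracy forces $M$ off the two rays through $N$. I would argue that this forces $M$ to have a predecessor in both coray directions, i.e. both $\Sigma_{\nwarrow}(M)$ and $\Sigma_{\swarrow}(M)$ extend at least one step back from $M$ while staying inside $\Gamma(Q)$; hence $M$ is not at the ``start'' of either coray. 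Being at the start of $\Sigma_{\swarrow}$ or $\Sigma_{\nwarrow}$ is precisely the characterization of indecomposable projectives, and lying on the boundary lines $y=1$ or $y=n$ is precisely $M \in \cL_1 \cup \cL_n$; a short case check (using that $M$ is an interior vertex of the rectangle) rules all of these out. For the ``if'' direction, suppose $M \notin \proj(KQ) \cup \cL_1 \cup \cL_n$. Then $M$ has both a predecessor $A$ along $\Sigma_{\nwarrow}(M)$ and a predecessor $B$ along $\Sigma_{\swarrow}(M)$ inside $\Gamma(Q)$ (this is where not being a projective and not lying on a boundary line are both used). I would then take $N$ to be the module at $\Sigma_{\searrow}(A) \cap \Sigma_{\nearrow}(B)$ — one must check this intersection point actually corresponds to a vertex of $\Gamma(Q)$, which follows because both $A$ and $B$ lie strictly between the lines $y=1$ and $y=n$ and $M$ provides an upper bound on how far the rays must travel; then $(N,M)$ is by construction a rectangular pair, and it is non-degenerate because $A$ and $B$ are genuine predecessors, so $M$ does not lie on $\Sigma_{\nearrow}(N) \cup \Sigma_{\searrow}(N)$.

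The main obstacle I anticipate is the combinatorial bookkeeping in the ``if'' direction: verifying that the candidate vertex $N = \Sigma_{\searrow}(A) \cap \Sigma_{\nearrow}(B)$ is actually a vertex of the embedded AR quiver $\Gamma(Q)$ and not a lattice point that falls outside the ``staircase'' shape of $\Gamma(Q)$. This is exactly the kind of subtlety that Lemma~\ref{lem:proj_hammock_2} and Proposition~\ref{prop:proj_hammock} are designed to control, so I would lean on the description of $\Gamma(Q)$ as embedded in $\mathbb{Z}\times[n]$ together with the well-known shape of the hammocks $\cR_\rightarrow(-)$; concretely, one can run a small induction moving $M$ one step at a time, or invoke that the rays $\Sigma_{\nearrow}$ and corays $\Sigma_{\nwarrow}$ terminate exactly when they hit a boundary line or a projective/injective. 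Once this existence-of-vertex point is settled, everything else is a direct translation through Definition~\ref{defn:rects} and Lemma~\ref{lem:ext}.
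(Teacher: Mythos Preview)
Your approach is correct and is precisely the natural way to unpack what the paper leaves implicit: in the paper this proposition is simply stated as ``a consequence of Lemma~\ref{lem:ext}'' with no further argument, so you are supplying the details rather than taking a different route.

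Two small streamlinings are worth noting. First, in the ``only if'' direction your phrasing ``being at the start of $\Sigma_{\swarrow}$ or $\Sigma_{\nwarrow}$ is precisely the characterization of indecomposable projectives'' is not quite accurate as stated (a module on $\cL_1$ is also at the start of $\Sigma_{\swarrow}$ without being projective); the cleanest argument is just: any nonsplit extension ending in $M$ forces $M\notin\proj(KQ)$, and non-degeneracy of the pair $(N,M)$ gives modules $E_1\neq M$ on $\Sigma_{\nwarrow}(M)$ and $E_2\neq M$ on $\Sigma_{\swarrow}(M)$, which immediately forces $M\notin\cL_n$ and $M\notin\cL_1$ respectively. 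Second, in the ``if'' direction the obstacle you anticipate evaporates if you take the \emph{immediate} predecessors $A,B$: then your candidate $N$ is exactly $\tau M$, which certainly exists since $M$ is not projective, and the resulting rectangle is just the Auslander--Reiten mesh at $M$. Because $M\notin\cL_1\cup\cL_n$, this mesh has two middle terms, so the AR sequence $0\to\tau M\to A\oplus B\to M\to 0$ is already the diamond you want. No induction or appeal to hammock shapes is needed.
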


\section{Almost rigid modules}\label{sec:MAR}

In this section, we introduce an exact structure $\cE_\diamond$ on $\mods KQ$ ($Q$ a quiver of type $\mathbb{A}_n$). We then prove that the classes of almost rigid modules and basic $\cE_\diamond$-rigid modules coincide (Corollary~\ref{cor:almost rigid}). We further show that $(\mods\Lambda,\cE_\diamond)$ is a 0-Auslander category (Theorem~\ref{thm:0_auslander}), a consequence of which is the fact that the maximal almost rigid modules are precisely the $\cE_\diamond$-tilting modules (Corollary~\ref{cor:tilting}). We conclude by discussing implications for the mutation theory of maximal almost rigid modules in Section~\ref{sec:mutation}.

\subsection{The diamond exact structure}

Let $Q$ be a quiver of type $\mathbb{A}_n$ with Auslander-Reiten quiver $\Gamma(Q)$. We recall the partition $\cL_1,\ldots,\cL_n$ of the indecomposable representations introduced in Section~\ref{sec:AR_quivers}. We note that the representations in $\cL_1 \cup \cL_n$ are precisely those for which there is at most one incoming arrow and at most one outgoing arrow in $\Gamma(Q)$. We consider the following definition.

\begin{defn}\label{def:diamond_exact}
    The \emph{diamond exact structure} on $\mods KQ$ is $\cE_\diamond := \cF_{\cL_1\cup\cL_n}$.
\end{defn}

We also need the following, which is an immediate consequence of Lemma~\ref{lem:hom}.

\begin{lem}\label{lem:diamond_unique}
    Let $N$ be an indecomposable module and let $L \in \cL_1 \cup \cL_n$. Then $\Hom(L,N) \neq 0$ if and only if $L$ lies at the end of $\Sigma_{\swarrow}(N)$ or $\Sigma_{\nwarrow}(N)$.
\end{lem}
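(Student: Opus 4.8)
The plan is to prove Lemma~\ref{lem:diamond_unique} by directly unwinding the hom-hammock description from Lemma~\ref{lem:hom}, using the special geometry of modules in $\cL_1 \cup \cL_n$. Recall that $\Hom(L,N) \neq 0$ precisely when $N \in \cR_\rightarrow(L)$, equivalently when $(L,N)$ is a rectangular pair; so the content of the lemma is that, when $L \in \cL_1 \cup \cL_n$, this rectangular region degenerates to a single ray (resp.\ coray), and membership of $N$ in it is detected by whether $L$ sits at the end of $\Sigma_{\swarrow}(N)$ or $\Sigma_{\nwarrow}(N)$.

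First I would treat the case $L \in \cL_1$, so $L$ lies on the bottom line $y = 1$ of the embedded AR quiver $\Gamma(Q)$. Since nothing lies below $y=1$, the coray $\Sigma_{\swarrow}(L)$ consists of $L$ alone, i.e.\ $L$ lies at the end of its own coray of slope $1$; dually, as noted just before the definition, modules in $\cL_1 \cup \cL_n$ have at most one incoming and at most one outgoing arrow, so the hom-hammock $\cR_\rightarrow(L)$ is not a genuine slanted rectangle but a half-rectangle: one of its two ``lower'' corners, namely the $\Sigma_{\searrow}(L)$-corner, is pinned to the boundary. Concretely, $N \in \cR_\rightarrow(L)$ iff the ray $\Sigma_{\nearrow}(L)$ meets the coray $\Sigma_{\nwarrow}(N)$ in an actual module and $N$ lies weakly to the right of $L$ inside the strip; I would observe this is equivalent to requiring that $L$ lie on $\Sigma_{\nwarrow}(N)$ and, since $L$ is on the bottom line, that $L$ be exactly the module at the end of $\Sigma_{\nwarrow}(N)$ (there is no room for the coray $\Sigma_{\nwarrow}(N)$ to extend past $L$ once it reaches $y=1$). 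Conversely, if $L$ is at the end of $\Sigma_{\nwarrow}(N)$ then $N$ lies on $\Sigma_{\nearrow}(L)$, and checking that $\Sigma_{\searrow}(L)$ (which hugs the boundary) meets $\Sigma_{\swarrow}(N)$ is automatic, so $(L,N)$ is a rectangular pair and $\Hom(L,N) \cong K \neq 0$.

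The case $L \in \cL_n$ is symmetric, with $y = n$ the top line: now $\Sigma_{\nwarrow}(L) = \{L\}$, the hom-hammock $\cR_\rightarrow(L)$ has its upper-left corner pinned to $y=n$, and $N \in \cR_\rightarrow(L)$ iff $L$ is the module at the end of $\Sigma_{\swarrow}(N)$, by the same argument applied to the ray $\Sigma_{\searrow}(L)$ and coray $\Sigma_{\swarrow}(N)$. Combining the two cases gives the stated ``if and only if''. I would also remark that exactly one of the two alternatives occurs for a given $N$ with $\Hom(L,N)\neq 0$ (unless $N = L$, when both are literally the same statement), since a module cannot lie at the end of two distinct corays through different lines, but strictly this is not needed for the biconditional as stated.

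The only mild obstacle is bookkeeping about what ``lies at the end of'' means at the boundary and making sure the degenerate rectangle argument is airtight — in particular that when $L$ is on the bottom (or top) line, the two conditions ``$(L,N)$ is a rectangular pair'' and ``$L$ is the module at the end of $\Sigma_{\nwarrow}(N)$ (resp.\ $\Sigma_{\swarrow}(N)$)'' really are equivalent and not merely one-directional. This is handled entirely by the well-known shape of hom-hammocks for projectives/$\cL_1\cup\cL_n$-modules recalled in Section~\ref{sec:AR_quivers} (cf.\ the discussion around Proposition~\ref{prop:proj_hammock} and \cite[Section~3.1.4.1]{schiffler}), so no new computation is required; the proof is a short deduction from Lemma~\ref{lem:hom}.
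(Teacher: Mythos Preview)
Your approach is exactly the paper's: the lemma is stated as ``an immediate consequence of Lemma~\ref{lem:hom}'', and you correctly identify that the content is just the degeneration of the hom-hammock $\cR_\rightarrow(L)$ to a single ray when $L$ sits on the boundary $y=1$ or $y=n$.

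However, you have systematically swapped the two corays in your case analysis. For $L \in \cL_1$ (bottom line), the ray $\Sigma_{\searrow}(L)$ collapses to $\{L\}$, so condition~(b) of Definition~\ref{defn:rects} for the pair $(L,N)$ becomes ``$L \in \Sigma_{\swarrow}(N)$''; since $\Sigma_{\swarrow}(N)$ is the coray of slope~$1$ heading \emph{down}-left from $N$, it terminates on $y=1$, and $L$ is then at the end of $\Sigma_{\swarrow}(N)$, not $\Sigma_{\nwarrow}(N)$ as you wrote. (Condition~(a) is then automatic, since $N$ itself lies on $\Sigma_{\nearrow}(L) \cap \Sigma_{\nwarrow}(N)$.) Symmetrically, $L \in \cL_n$ forces $L$ to be at the end of $\Sigma_{\nwarrow}(N)$. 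Your claim that ``$\Sigma_{\nearrow}(L)$ meets $\Sigma_{\nwarrow}(N)$ in a module'' is equivalent to ``$L$ lies on $\Sigma_{\nwarrow}(N)$'' is also false as written; the intersection point is generally a third module, not $L$. Once you correct the labels $\swarrow \leftrightarrow \nwarrow$ in each case and track condition~(b) rather than~(a), the argument goes through and the disjunctive conclusion is unaffected.
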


Note in particular that Lemma~\ref{lem:diamond_unique} implies that there exist at most two elements of $\cL_1 \cup \cL_n$ admitting nonzero morphisms to a given $X$. Moreover, the corresponding rectangular pairs $(L,X)$ will always be degenerate.

We now give a characterization of $\cE_\diamond(N,M)$ in the case where $M$ and $N$ are indecomposable.  As an additive bifunctor on $\mods \Lambda$, this uniquely determines the exact structure $\cE_\diamond$. More precisely, we show that the nonsplit $\cE_\diamond$-exact sequences with indecomposable endterms are precisely the diamond exact sequences. The justifies the name diamond exact structure.

\begin{prop}\label{prop:diamond_seqs}
    Let $M$ and $N$ be indecomposable. Then
   \[
\cE_\diamond(N,M) = \{ \eta \in \Ext^1(N,M) \; | \; \eta \mbox{ is split or a diamond exact sequence}\}.\]
\end{prop}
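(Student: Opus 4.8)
The strategy is to compare the two sides by analyzing the three mutually exclusive cases for the pair $(M,N)$ provided by Lemma~\ref{lem:ext}: either $(M,N)$ is a non-degenerate rectangular pair, or an upper/lower deleted pair, or neither. In the last case $\Ext^1(N,M) = 0$, so both sides reduce to $\{0\}$ (only the split sequence) and there is nothing to prove. So the work is in the first two cases, where $\Ext^1(N,M) \cong K$ is one-dimensional, spanned by the single nonsplit sequence $\eta_0$ described in Lemma~\ref{lem:ext}. The claim then amounts to: $\eta_0 \in \cE_\diamond(N,M)$ if and only if $\eta_0$ is a diamond sequence, i.e.\ if and only if $(M,N)$ is non-degenerate rectangular (the case where the middle term decomposes) rather than deleted.

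\textbf{Key steps.} First, unwind the definition: $\cE_\diamond = \cF_{\cL_1 \cup \cL_n}$, and by Remark~\ref{rem:FX}, a short exact sequence $0 \to M \xrightarrow{f} E \xrightarrow{g} N \to 0$ lies in $\cE_\diamond$ if and only if $\Hom(L,g)$ is surjective for every $L \in \cL_1 \cup \cL_n$. Since $\Hom(L,N)$ is at most one-dimensional (Lemma~\ref{lem:hom}), surjectivity of $\Hom(L,g)$ fails precisely when $\Hom(L,N) \neq 0$ but every map $L \to N$ fails to lift through $g$. By Lemma~\ref{lem:diamond_unique}, the only candidates for such $L$ are the (at most two) modules at the ends of $\Sigma_{\swarrow}(N)$ and $\Sigma_{\nwarrow}(N)$. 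So the whole question comes down to: for $L$ at the end of one of these two corays, does the generator of $\Hom(L,N)$ lift through $g$?

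\textbf{Case analysis.} In the deleted-pair case, say the upper deleted pair, $E$ is a single indecomposable, and the point of the ``deletion'' is exactly that one of the two boundary modules — the $L$ at the end of $\Sigma_{\nwarrow}(N)$, which by the upper-deleted condition equals $\tau^{-1}$ of the module at the end of $\Sigma_{\nearrow}(M)$ — does \emph{not} lie in $\cR_\leftarrow(E)$, i.e.\ $\Hom(L,E) = 0$ while $\Hom(L,N) \neq 0$; hence $\Hom(L,g)$ is not surjective and $\eta_0 \notin \cE_\diamond(N,M)$. (One should verify this using the positions of $L$, $E$, $N$ in $\Gamma(Q)$ via Lemma~\ref{lem:hom}, and Proposition~\ref{prop:proj_hammock} or Lemma~\ref{lem:proj_hammock_2} to locate the ends of the relevant rays/corays; this is where the bulk of the geometric bookkeeping sits and is the main obstacle.) Conversely, in the non-degenerate rectangular case, $E = E_1 \oplus E_2$ with $E_1$ at $\Sigma_{\nearrow}(M) \cap \Sigma_{\nwarrow}(N)$ and $E_2$ at $\Sigma_{\searrow}(M) \cap \Sigma_{\swarrow}(N)$. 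For any $L \in \cL_1 \cup \cL_n$ with $\Hom(L,N) \neq 0$, Lemma~\ref{lem:diamond_unique} puts $L$ at the end of $\Sigma_{\swarrow}(N)$ or $\Sigma_{\nwarrow}(N)$; a short check with Lemma~\ref{lem:hom} shows $L$ then admits a nonzero map to $E_2$ (respectively $E_1$), and Proposition~\ref{prop:factor} (together with Remark~\ref{rem:diamond_factor}) shows the composite $L \to E_i \hookrightarrow E \xrightarrow{g} N$ is nonzero, hence — using that $\Hom(L,N)$ is one-dimensional — a lift of the generator. Thus $\Hom(L,g)$ is surjective for all such $L$, giving $\eta_0 \in \cE_\diamond(N,M)$. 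Assembling the three cases yields the stated equality.

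\textbf{Main obstacle.} The genuine difficulty is the deleted-pair direction: carefully verifying from the $\Gamma(Q)$-geometry that the ``deleted corner'' is exactly a module of $\cL_1 \cup \cL_n$ mapping nonzero to $N$ but zero to the middle term $E$. This requires pinning down which of $\Sigma_{\swarrow}(N), \Sigma_{\nwarrow}(N)$ has its endpoint on the boundary $\cL_1 \cup \cL_n$ and confirming it lies outside $\cR_\leftarrow(E)$ — essentially a translation of the definition of ``deleted pair'' into a Hom-vanishing statement, using Lemma~\ref{lem:hom} and the results on boundary rectangles (Proposition~\ref{prop:proj_hammock}, Lemma~\ref{lem:proj_hammock_2}).
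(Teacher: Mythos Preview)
Your proposal is correct and follows essentially the same route as the paper's proof: the same trichotomy via Lemma~\ref{lem:ext}, the same factorization through $E_1$ or $E_2$ (using Lemma~\ref{lem:diamond_unique} and Proposition~\ref{prop:factor}) in the diamond case, and the same witness $X$ at the end of $\Sigma_{\nwarrow}(N)$ with $\Hom(X,N)\neq 0$ but $\Hom(X,E)=0$ in the upper-deleted case. One small correction to your ``main obstacle'': neither Proposition~\ref{prop:proj_hammock} nor Lemma~\ref{lem:proj_hammock_2} is needed --- the paper just notes that $X\in\cL_n$ ``by construction'' (since $X=\tau^{-1}L$ exists, $L$ is not injective, so the ray $\Sigma_{\nearrow}(M)$ must have terminated at the top row $\cL_n$), after which $\Hom(X,N)\neq 0$ and $\Hom(X,E)=0$ are immediate from Lemma~\ref{lem:hom}.
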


\begin{proof}
    Let $\eta = (0 \rightarrow M \rightarrow E \rightarrow N \rightarrow 0) \in \Ext^1(N,M)$  be a non-split exact sequence. We suppose first that $E$ is not indecomposable. Then, by Lemma~\ref{lem:ext}, we have $E \cong E_1 \oplus E_2$ with $E_1$ and $E_2$ both indecomposable. Schematically, the modules $E_1$ and $E_2$ lie on the corays $\Sigma_{\swarrow}(N)$ and $\Sigma_{\nwarrow}(N)$, see the right diagram of Figure~\ref{fig:seq_ex}. Now let $L \in \cL_1 \cup \cL_n$ such that $\Hom(L,N) \neq 0$. By Lemma~\ref{lem:diamond_unique}, this means $L$ lies at the end of $\Sigma_{\swarrow}(N)$ or $\Sigma_{\nwarrow}(N)$, see Figure~\ref{fig:exact}. Lemma~\ref{lem:hom} and Proposition~\ref{prop:factor} thus imply that every morphism $L \rightarrow N$ factors through the map $E_1 \oplus E_2 \rightarrow N$ comprising $\eta$, see Figure~\ref{fig:exact}. We conclude that $\eta \in \cF_{\cL_1\cup\cL_n} = \cE_\diamond$.

    For the converse, suppose that $E$ is indecomposable. By Lemma~\ref{lem:ext}, this means $(N,M)$ forms either a lower deleted pair or an upper deleted pair. We consider the case of an upper deleted pair, the other case being similar. Now let $X$ be the module which lies at the end of the coray $\Sigma_{\nwarrow}(N)$. (We have $X = \tau^{-1} L$ in the schematic on the left side of Figure~\ref{fig:seq_ex}.) By Lemma~\ref{lem:hom}, we have $\Hom(X,N) \neq 0$ and $\Hom(X,E) = 0$. Thus the sequence
     $$0 \rightarrow \Hom(X,M) \rightarrow \Hom(X,E)\rightarrow \Hom(X,N) \rightarrow 0$$
     is not exact. Since $X \in \cL_n$ by construction, we conclude that $\eta \notin \cE_\diamond$.
\end{proof}

\begin{figure}
{\small
\[
\begin{tikzpicture}
[inner sep=0.5mm]
 \draw [dashed] (6,2.75) -- (13,2.75);
\draw [dashed] (6,-3) -- (13,-3);
\node at (13.5,2.75) {$y = n$};
\node at (13.5,-3) {$y = 1$};

 \filldraw[fill=gray!50!white, draw=black] (8,1) -- (9,2) -- (12,-1) -- (11,-2) -- cycle;
 \draw (8.25,2.75) -- (9,2);
 \node at (8.25,3) {$L$};
 \node at (8.25,2.75) [shape = circle,draw,fill] {};
\node at (7.75,1) {$M$};
 \node at (8,1) [shape=circle,draw,fill] {};
 \node at (9,2) [shape=circle,draw,fill] {};
 \node at (12,-1) [shape=circle,draw,fill] {};
  \node at (11,-2) [shape=circle,draw,fill] {};
  \node at (9.25,2.25) {$E_1$};
   \node at (11,-2.25) {$E_2$};
 \node at (12.25,-1) {$N$};
  \end{tikzpicture}
\]}
\caption{Schematic diagrams of the proof of Proposition~\ref{prop:diamond_seqs}.}\label{fig:exact}
\end{figure}
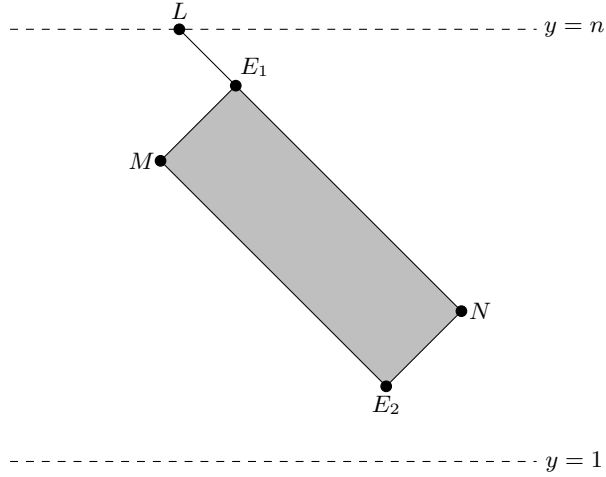

A consequence of Proposition~\ref{prop:diamond_seqs} is the following. 

\begin{coro}\label{cor:almost rigid}
Let $T \in \rep Q$. Then $T$ is almost rigid if and only if $ T$ is $\cE_\diamond$-rigid.
\end{coro}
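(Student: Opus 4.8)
The plan is to deduce this directly from Proposition~\ref{prop:diamond_seqs} together with the fact that $\cE_\diamond$ is an additive subfunctor of $\Ext^1$. Write $T = \bigoplus_{i} T_i$ as a direct sum of indecomposables. Since $\cE_\diamond(-,-)$ is additive in each variable, $\cE_\diamond(T,T) = \bigoplus_{i,j} \cE_\diamond(T_i,T_j)$, and likewise $\Ext^1(T,T) = \bigoplus_{i,j}\Ext^1(T_i,T_j)$. So $T$ is $\cE_\diamond$-rigid, i.e. $\cE_\diamond(T,T) = 0$, if and only if $\cE_\diamond(T_i,T_j) = 0$ for every pair of indecomposable summands $T_i, T_j$.

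Next I would translate each side. By Proposition~\ref{prop:diamond_seqs}, for indecomposables $T_i, T_j$ the group $\cE_\diamond(T_i,T_j)$ consists exactly of those classes in $\Ext^1(T_i,T_j)$ that are split or diamond. By Lemma~\ref{lem:ext}, $\Ext^1(T_i,T_j)$ is either $0$ or one-dimensional; when it is one-dimensional it is spanned either by a diamond sequence (the non-degenerate rectangular pair case) or by a sequence with indecomposable middle term (the deleted pair cases). Hence $\cE_\diamond(T_i,T_j) \neq 0$ precisely when $\Ext^1(T_i,T_j)$ is spanned by a diamond short exact sequence, i.e. when $\Ext^1(T_i,T_j)$ contains a diamond short exact sequence. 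Therefore $T$ is $\cE_\diamond$-rigid if and only if there is no pair of indecomposable summands $T_i,T_j$ with $\Ext^1(T_i,T_j)$ containing a diamond short exact sequence — which is exactly Definition~\ref{def:almost_rigid} of $T$ being almost rigid (noting that if $\Ext^1(T_i,T_j)$ is nonzero and one-dimensional, "contains a diamond sequence" and "is spanned by a diamond sequence" coincide, since every nonzero scalar multiple of a diamond sequence is again a diamond sequence — the middle term is unchanged).

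There is essentially no obstacle here; the only point requiring a word of care is the compatibility between "$\Ext^1$ contains a diamond sequence" in Definition~\ref{def:almost_rigid} and the characterization of $\cE_\diamond$ in Proposition~\ref{prop:diamond_seqs}, which is handled by the one-dimensionality of $\Ext^1$ between indecomposables (Lemma~\ref{lem:ext}) and the observation that rescaling a short exact sequence does not change whether its middle term decomposes. I would state the argument in two short paragraphs: first the reduction to indecomposable summands via additivity of $\cE_\diamond$ and $\Ext^1$, then the pairwise translation via Proposition~\ref{prop:diamond_seqs} and Lemma~\ref{lem:ext}, concluding by comparison with Definition~\ref{def:almost_rigid}.
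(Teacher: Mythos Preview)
Your proof is correct and follows essentially the same route as the paper's: both reduce to pairs of indecomposable summands and invoke Proposition~\ref{prop:diamond_seqs} to identify $\cE_\diamond(T_i,T_j)\neq 0$ with the presence of a diamond sequence. Your version is more explicit about the additivity of $\cE_\diamond$, the one-dimensionality from Lemma~\ref{lem:ext}, and the ``contains vs.\ spanned by'' subtlety, all of which the paper's proof leaves implicit.
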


\begin{proof}
Suppose that $T$ is almost rigid. The only nonzero extensions between two indecomposable summands of $T$ are of the form $\xi : 0\rightarrow M\rightarrow E \rightarrow N\rightarrow 0$ with $E$ indecomposable. Since $\xi \notin \cE_\diamond$, then $\cE_\diamond(T,T) = 0$.   

On the other hand, if $\cE_\diamond(T,T) = 0$, then the only non-split extensions between two indecomposable summands of $T$ are of the form $\xi : 0\rightarrow M\rightarrow E \rightarrow N\rightarrow 0$ where $E$ cannot be decomposable. Therefore, $T$ is almost rigid.
\end{proof}

We also obtain the following description of $\mathcal{E}$-projectives and $\mathcal{E}$-injectives.
\begin{coro}
    \label{cor:new}

\textup{(a)} 
    $\proj(\mathcal{E}_\diamond)=\add(\proj KQ\cup \mathcal{L}_1\cup \mathcal{L}_n)$
    
 \textup{(b)}   $\inj(\mathcal{E}_\diamond)=\add(\inj KQ\cup \mathcal{L}_1\cup \mathcal{L}_n)$

 \textup{(c)}   $\proj\textup{-}\inj(\mathcal{E}_\diamond)=\add(\mathcal{L}_1\cup \mathcal{L}_n)$
\end{coro}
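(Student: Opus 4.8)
The plan is to derive parts (a)--(c) directly from Proposition~\ref{prop:exact_structure_proj} applied to the subcategory $\cX = \cL_1 \cup \cL_n$. Recall that $\cE_\diamond$ is by definition $\cF_{\cL_1\cup\cL_n}$, so the proposition gives immediately
\[
\proj(\cE_\diamond) = \add\big((\cL_1\cup\cL_n) \cup \proj(KQ)\big) \quad\text{and}\quad \inj(\cE_\diamond) = \add\big(\tau(\cL_1\cup\cL_n) \cup \inj(KQ)\big).
\]
Part (a) is then literally the first of these. For part (b), the only thing to check is that $\add(\tau(\cL_1\cup\cL_n) \cup \inj(KQ)) = \add(\cL_1 \cup \cL_n \cup \inj(KQ))$. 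This reduces to the observation that $\cL_1$ and $\cL_n$ are each closed under $\tau$ and $\tau^{-1}$ \emph{except} at the boundary: $\tau$ sends $\cL_i$ to $\cL_i$ wherever defined, $\tau P(i)$ is undefined, and $\tau^{-1}I(i)$ is undefined. So $\tau(\cL_1) = \cL_1 \setminus \{$the module at the end of each $\nearrow$/$\searrow$ structure$\}$, more precisely $\tau(\cL_1) = \cL_1 \setminus \{\text{the injective in } \cL_1\}$ if there is one, together with possibly some modules pushed out. The cleanest way: $\cL_1 = \{\tau^{-m}P(1): m \geq 0\}$ is a single $\tau$-orbit, so $\tau(\cL_1) = \{\tau^{-m}P(1): m\geq 1\} = \cL_1 \setminus \{P(1)\}$, and $P(1) \in \proj(KQ)$... no wait, we need it in terms of injectives. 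Since $\cL_1$ is a finite $\tau$-orbit, it has a unique injective module $J_1$ (the one with no $\tau^{-1}$), and $\cL_1 \setminus \tau(\cL_1) = \{P(1)\}$ while $\tau(\cL_1)\setminus \cL_1 = \emptyset$; so $\add(\tau\cL_1 \cup \inj KQ) = \add((\cL_1\setminus\{P(1)\}) \cup \inj KQ)$, and since $P(1)$ lies on the $\tau$-orbit ending at the injective $J_1 \in \cL_1 \subseteq \inj KQ$-... hmm, $J_1$ is injective so $J_1 \in \inj KQ$, but that does not recover $P(1)$.

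Let me reconsider: the correct statement is that $\add(\tau\cL_1 \cup \inj KQ)$ need not equal $\add(\cL_1 \cup \inj KQ)$ on the nose, so I should instead argue the set equality $\inj(\cE_\diamond) = \add(\inj KQ \cup \cL_1 \cup \cL_n)$ by a symmetric/dual argument. The cleanest route is: apply the \emph{dual} of Proposition~\ref{prop:exact_structure_proj}, or observe that $\cE_\diamond$ is self-dual under the standard duality $D$ on $\mods KQ$ (which swaps $\cL_1 \leftrightarrow \cL_1$ or $\cL_1\leftrightarrow \cL_n$ depending on $Q$, but in any case preserves $\cL_1 \cup \cL_n$ as a set and swaps $\proj \leftrightarrow \inj$). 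Thus (b) follows from (a) by applying $D$. Alternatively, and most robustly, I would verify (b) directly: $I \in \inj(\cE_\diamond)$ iff there is no nonsplit sequence $(0 \to I \to B \to C \to 0) \in \cE_\diamond$; by Proposition~\ref{prop:diamond_seqs} (extended additively to all indecomposable endterms), the nonsplit $\cE_\diamond$-sequences with indecomposable endterms are exactly the diamond sequences, and by Proposition~\ref{prop:rel_proj}(2), there exists an indecomposable $N$ with $\Ext^1(N,M)$ containing a diamond sequence iff $M \notin \inj(KQ) \cup \cL_1 \cup \cL_n$. This gives (b) for indecomposables directly, and the general case follows since an $\cE_\diamond$-sequence $0 \to I \to B \to C \to 0$ ending at decomposable $C = \bigoplus C_j$ splits iff each pullback along $C_j \hookrightarrow C$ splits. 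I expect this route to be cleanest and I would present (a) and (b) this way in parallel, then (c) as $\proj(\cE_\diamond) \cap \inj(\cE_\diamond)$.

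For part (c), compute the intersection $\add(\proj KQ \cup \cL_1 \cup \cL_n) \cap \add(\inj KQ \cup \cL_1 \cup \cL_n)$. Since these are additive subcategories generated by the indicated indecomposables, the intersection is generated by the indecomposables lying in both lists: namely $(\proj KQ \cup \cL_1 \cup \cL_n) \cap (\inj KQ \cup \cL_1 \cup \cL_n) = (\cL_1 \cup \cL_n) \cup (\proj KQ \cap \inj KQ)$. So it remains to observe $\proj KQ \cap \inj KQ \subseteq \cL_1 \cup \cL_n$: every projective-injective indecomposable $P(i) = I(j)$ lies in some $\cL_k$, and by Lemma~\ref{lem:proj_hammock_2} (applied to $P(i)$) the rectangle from $P(i)$ to $I(i)$ has its corners in $\cL_1$ and $\cL_n$; when $P(i)$ is itself injective this rectangle degenerates and forces $P(i) \in \cL_1 \cup \cL_n$. (Concretely: a projective-injective indecomposable over a type $\mathbb{A}_n$ path algebra has a simple top and simple socle only when $n$ is small, but in general $P(i)$ projective-injective means the corresponding diagonal is a "fan"; the AR-theoretic fact is that such modules sit at the top boundary $\cL_n$, matching Proposition~\ref{prop:proj_hammock}.) The main obstacle is precisely nailing down this last containment $\proj KQ \cap \inj KQ \subseteq \cL_1 \cup \cL_n$ cleanly from the results already available; I would extract it from Lemma~\ref{lem:proj_hammock_2} together with Proposition~\ref{prop:proj_hammock}(1), noting that if $P(i)$ is projective-injective then it equals both the module at the end of some ray from itself and at the end of some coray, which by those results pins it to $\cL_1$ or $\cL_n$.
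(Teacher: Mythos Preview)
Your proposal reaches correct conclusions for all three parts, but for (b) you take an unnecessary detour, and the confusion that sent you there is a small computational slip. The paper handles (a) and (b) in one stroke by invoking Proposition~\ref{prop:exact_structure_proj}. Your abandonment of that route for (b) comes from miscomputing $\tau(\cL_1)$: applying $\tau$ to $\cL_i = \{\tau^{-m}P(i) : 0 \leq m \leq k\}$ yields $\{\tau^{-(m-1)}P(i) : 1 \leq m \leq k\}$, which is $\cL_i$ \emph{minus its unique injective module} (not minus $P(i)$, as you wrote). Since that missing module already lies in $\inj KQ$, one has $\add(\tau(\cL_1\cup\cL_n) \cup \inj KQ) = \add((\cL_1\cup\cL_n) \cup \inj KQ)$ immediately, and (b) follows from Proposition~\ref{prop:exact_structure_proj} just as (a) does. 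Your fallback argument via Proposition~\ref{prop:diamond_seqs} and Proposition~\ref{prop:rel_proj}(2) is valid but heavier than needed.

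For (c), your reduction to showing $\proj KQ \cap \inj KQ \subseteq \cL_1 \cup \cL_n$ is exactly what the paper does, but the paper's justification is more direct than your appeal to Lemma~\ref{lem:proj_hammock_2} and Proposition~\ref{prop:proj_hammock}: over a type~$\mathbb{A}_n$ path algebra, $\proj KQ \cap \inj KQ$ is nonempty only when all arrows of $Q$ point in the same direction, and in that case the unique indecomposable projective-injective is $P(1)$ (or $P(n)$, depending on orientation), which by definition lies in $\cL_1$ (respectively $\cL_n$).
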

\begin{proof}
    Parts (a) and (b) follow directly from Proposition~\ref{prop:exact_structure_proj}. Since the set of projective-injectives is the intersection of the two sets in (a) and (b), part (c) follows from the fact that $\proj KQ\cap \inj KQ$ is empty unless all arrows in $Q$ point in the same direction, and in that case, the only projective-injective $KQ$-module lies in $\mathcal{L}_1\cup \mathcal{L}_n.$ 
\end{proof}

\subsection{0-Auslander property}\

We now turn our attention to proving that the category $(\mods KQ, \cE_\diamond)$ is 0-Auslander.

\begin{prop}\label{prop:dominant}
    $\cE_\diamond$ has dominant dimension 1.
\end{prop}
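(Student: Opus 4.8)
The goal is to show that for every $\mathcal{E}_\diamond$-projective $P$ there is an $\mathcal{E}_\diamond$-admissible short exact sequence $0 \to P \to Q \to I \to 0$ with $Q$ projective-injective and $I$ injective in $(\mathrm{mod}\,KQ, \mathcal{E}_\diamond)$. By additivity of the bifunctor $\mathcal{E}_\diamond$, it suffices to treat the case where $P$ is indecomposable. By Corollary~\ref{cor:new}(a), an indecomposable $\mathcal{E}_\diamond$-projective is either an indecomposable projective $P(i)$ or lies in $\mathcal{L}_1 \cup \mathcal{L}_n$. The second case is easy: if $P \in \mathcal{L}_1 \cup \mathcal{L}_n$, then $P$ is itself projective-injective by Corollary~\ref{cor:new}(c), so the split sequence $0 \to P \to P \to 0 \to 0$ works (with $I = 0$ injective). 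So the real content is the case $P = P(i)$.

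For $P = P(i)$, the plan is to take the diamond short exact sequence connecting $P(i)$ to its corresponding injective $I(i)$. Concretely, let $E_1$ and $E_2$ be the modules at the ends of $\Sigma_{\searrow}(P(i))$ and $\Sigma_{\nearrow}(P(i))$, respectively. By Lemma~\ref{lem:proj_hammock_2}, $E_1 \in \mathcal{L}_1$ and $E_2 \in \mathcal{L}_n$, so $E_1 \oplus E_2$ is projective-injective by Corollary~\ref{cor:new}(c). For $1 < i < n$, Remark~\ref{rem:proj_hammock} gives a diamond short exact sequence
\[
0 \to P(i) \to E_1 \oplus E_2 \to I(i) \to 0,
\]
which lies in $\mathcal{E}_\diamond$ by Proposition~\ref{prop:diamond_seqs}; here $I(i)$ is injective, hence $\mathcal{E}_\diamond$-injective by Corollary~\ref{cor:new}(b). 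This is exactly a sequence of the required form. For the boundary cases $i = 1$ and $i = n$, we have $P(i) \in \mathcal{L}_1 \cup \mathcal{L}_n$ already (as noted above, representations in $\mathcal{L}_1 \cup \mathcal{L}_n$ have at most one incoming and one outgoing arrow in $\Gamma(Q)$, and $P(1), P(n)$ are among them), so they are handled by the easy case. One should double-check the degenerate possibility flagged in Proposition~\ref{prop:proj_hammock}(2), namely $\{P(i), I(i)\} = \{E_1, E_2\}$; in that situation $P(i)$ itself already lies in $\mathcal{L}_1 \cup \mathcal{L}_n$ and is projective-injective, so again the split sequence suffices.

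Assembling these cases: every indecomposable $\mathcal{E}_\diamond$-projective $P$ fits into an $\mathcal{E}_\diamond$-admissible sequence $0 \to P \to Q \to I \to 0$ with $Q \in \proj\text{-}\inj(\mathcal{E}_\diamond)$ and $I \in \inj(\mathcal{E}_\diamond)$, and taking direct sums handles the general (basic, or arbitrary) $\mathcal{E}_\diamond$-projective. Hence the $\mathcal{E}_\diamond$-dominant dimension of $KQ$ is at most $1$, and it is exactly $1$ since, e.g., for $1 < i < n$ the module $P(i)$ is not itself injective. The main obstacle I anticipate is not any deep step but rather the careful bookkeeping of the boundary and degenerate cases — ensuring that when the diamond sequence degenerates, $P(i)$ genuinely lands in $\mathcal{L}_1 \cup \mathcal{L}_n$ — and confirming via Proposition~\ref{prop:diamond_seqs} that the chosen sequence is genuinely $\mathcal{E}_\diamond$-admissible rather than merely a short exact sequence in the ambient category.
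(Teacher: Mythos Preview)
Your proposal is correct and follows essentially the same approach as the paper: reduce to indecomposable $\mathcal{E}_\diamond$-projectives, dispose of those in $\mathcal{L}_1\cup\mathcal{L}_n$ as already projective-injective, and for $P(i)$ with $1<i<n$ use Lemma~\ref{lem:proj_hammock_2} and Remark~\ref{rem:proj_hammock} to build the diamond sequence $0\to P(i)\to E_1\oplus E_2\to I(i)\to 0$, which is $\mathcal{E}_\diamond$-admissible by Proposition~\ref{prop:diamond_seqs}. Your extra bookkeeping on the degenerate case is harmless but unnecessary, since the paper simply observes that $P(1)\in\mathcal{L}_1$ and $P(n)\in\mathcal{L}_n$ are already covered.
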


\begin{proof}
    Let $M$ be an indecomposable $\cE_\diamond$-projective. If $M \in \cL_1 \cup \cL_n$, then $M$ is $\cE_\diamond$-injective by 
     Corollary~\ref{cor:new}, so there is nothing to show. Thus Proposition~\ref{prop:exact_structure_proj} implies that we only need to consider $M = P(i)$ for some $1 < i < n$. Now let $E_1$ and $E_2$ be the modules lying at the ends of $\Sigma_{\searrow}(M)$ and $\Sigma_{\nearrow}(M)$. By Lemma~\ref{lem:proj_hammock_2}, we have that $E_1 \in \cL_1$ and $E_2 \in \cL_n$, and so $E_1 \oplus E_2$ is $\cE_\diamond$-projective-injective by Corollary~\ref{cor:new}. Remark~\ref{rem:proj_hammock} and Proposition~\ref{prop:diamond_seqs} then say that there is an $\cE_\diamond$-exact sequence $$0 \rightarrow P(i) \rightarrow E_1 \oplus E_2 \rightarrow I(i) \rightarrow 0.$$ This proves the result.
\end{proof}

\begin{prop}\label{prop:hereditary}
    $\cE_\diamond$ has global dimension 1.
\end{prop}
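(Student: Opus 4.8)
The plan is to show that every module $M \in \mods KQ$ has an $\cE_\diamond$-projective resolution of length at most one, i.e. that there is an $\cE_\diamond$-admissible short exact sequence $0 \to P_1 \to P_0 \to M \to 0$ with $P_0, P_1 \in \proj(\cE_\diamond)$. By additivity it suffices to treat $M$ indecomposable. If $M \in \proj(\cE_\diamond) = \add(\proj KQ \cup \cL_1 \cup \cL_n)$ (using Corollary~\ref{cor:new}), there is nothing to prove, so assume $M = \tau^{-m}P(i)$ for some $m \geq 1$ and some $1 < i < n$ (the cases $i \in \{1,n\}$ are excluded since then $M \in \cL_1 \cup \cL_n$).

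The key step is to produce the right candidate for $P_0$. First I would take $P_0$ to be the $\cE_\diamond$-projective cover of $M$: concretely, using Lemma~\ref{lem:diamond_unique}, let $L_1, L_2 \in \cL_1 \cup \cL_n$ be the (at most two) modules lying at the ends of $\Sigma_{\swarrow}(M)$ and $\Sigma_{\nwarrow}(M)$, which are exactly the modules in $\cL_1 \cup \cL_n$ admitting nonzero maps to $M$. One expects the natural map $L_1 \oplus L_2 \to M$ to be an $\cE_\diamond$-admissible epimorphism with an indecomposable kernel $K$; indeed, by Remark~\ref{rem:FX}, admissibility of $0 \to K \to L_1 \oplus L_2 \to M \to 0$ amounts to checking that $\Hom(X, L_1 \oplus L_2) \to \Hom(X, M)$ is surjective for all $X \in \cL_1 \cup \cL_n$, which follows from Proposition~\ref{prop:factor} since every nonzero map $X \to M$ with $X \in \cL_1\cup\cL_n$ factors through one of $L_1, L_2$ by Lemma~\ref{lem:diamond_unique}. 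Identifying the kernel $K$ from the AR-quiver picture (it should be the module at $\Sigma_{\searrow}(L_1) \cap \Sigma_{\nearrow}(L_2)$, or handled symmetrically when only one $L_j$ exists), one then needs that $K$ itself is $\cE_\diamond$-projective. Here I would invoke Proposition~\ref{prop:proj_hammock}: the rectangle spanned by $L_1 \in \cL_1$ and $L_2 \in \cL_n$ has its far corner $N$ equal to some injective $I(j)$ — but more relevantly, the relevant corner is again projective, or lies in $\cL_1 \cup \cL_n$, hence is $\cE_\diamond$-projective. So setting $P_1 = K$ gives the desired length-one resolution.

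The main obstacle will be the careful bookkeeping of the degenerate cases: when $M$ lies on $\cL_2$ or $\cL_{n-1}$ only one of $L_1, L_2$ may exist, or the ray/coray intersections defining the rectangle may degenerate, and one must check the kernel is still $\cE_\diamond$-projective and the sequence still $\cE_\diamond$-admissible. I would organize this by noting that the span of $L_1 \oplus L_2 \to M$ together with Proposition~\ref{prop:proj_hammock} (applied to the rectangle with lower boundary on $\cL_1$ and upper boundary on $\cL_n$, one corner of which is $M$ and the opposite corner an $\cE_\diamond$-projective) forces the kernel into $\add(\proj KQ \cup \cL_1 \cup \cL_n)$. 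An alternative, perhaps cleaner, route is to observe that $\cE_\diamond$ has enough projectives with $\proj(\cE_\diamond) = \add(\proj KQ \cup \cL_1 \cup \cL_n)$ and $\proj\text{-}\inj(\cE_\diamond) = \add(\cL_1 \cup \cL_n)$ by Corollary~\ref{cor:new}, and then combine Proposition~\ref{prop:dominant} with a dimension-shifting argument: the dominant-dimension-one sequence $0 \to P(i) \to E_1 \oplus E_2 \to I(i) \to 0$ from Proposition~\ref{prop:dominant} already exhibits $\pd_{\cE_\diamond}(I(i)) \le 1$ for each $i$, and since $\Gamma(Q)$ is finite one can run $\tau^{-1}$ along corays to reduce a general $M = \tau^{-m}P(i)$ to one of these, using that applying the $\cE_\diamond$-admissible epimorphism $L_1\oplus L_2 \to M$ strictly decreases $m$. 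Either way the conclusion is $\pd_{\cE_\diamond}(M) \le 1$ for all $M$, hence the $\cE_\diamond$-global dimension of $KQ$ is at most $1$; and it is exactly $1$ since $\cE_\diamond$ contains nonsplit sequences (the diamond sequences), so $KQ$ is not $\cE_\diamond$-semisimple.
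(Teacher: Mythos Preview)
Your overall strategy matches the paper's: take $M$ indecomposable and not $\cE_\diamond$-projective, look at the modules $E_1, E_2$ at the ends of $\Sigma_{\swarrow}(M)$ and $\Sigma_{\nwarrow}(M)$, and try to build a length-one $\cE_\diamond$-projective resolution with $E_1\oplus E_2$ as the degree-zero term. However, there is a genuine gap.

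You assert that the natural map $L_1 \oplus L_2 \to M$ is an $\cE_\diamond$-admissible epimorphism. Your argument using Remark~\ref{rem:FX} and Lemma~\ref{lem:diamond_unique} correctly shows that $\Hom(X,-)$ applied to this map is surjective for all $X\in\cL_1\cup\cL_n$. But admissibility presupposes that the map is actually an epimorphism in $\mods KQ$, and this can fail. The paper's Example~\ref{ex:degenerate_cover} gives exactly such an instance: for $M = \tau^{-1}P(2)$ in the running $\mathbb{A}_5$ example, the coray ends are $P(1)\in\cL_1$ and $P(5)\in\cL_n$, yet $P(1)\oplus P(5)\to M$ is not even surjective. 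So your candidate short exact sequence does not exist in that case, and your identification of the kernel at ``$\Sigma_{\searrow}(L_1)\cap\Sigma_{\nearrow}(L_2)$'' (which, incidentally, points in the wrong direction; the relevant intersection is $\Sigma_{\nwarrow}(E_1)\cap\Sigma_{\swarrow}(E_2)$) need not correspond to any module.

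The paper repairs this by splitting into two cases. When a module $N$ does sit at $\Sigma_{\nwarrow}(E_1)\cap\Sigma_{\swarrow}(E_2)$, the diamond sequence $0\to N\to E_1\oplus E_2\to M\to 0$ works, and one checks $N$ is $\cE_\diamond$-projective (either via Proposition~\ref{prop:proj_hammock} or because a nonzero map to a projective $E_i$ forces $N$ projective). When no such $N$ exists, the paper instead forms $f\colon P\oplus E_1\oplus E_2 \to M$ with $P$ the ordinary projective cover, verifies this larger map is an $\cE_\diamond$-admissible epimorphism, and then argues that every indecomposable summand of $\ker f$ is $\cE_\diamond$-projective by ruling out the remaining possibilities via the AR-combinatorics. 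Your proposal is missing exactly this second case; the ``alternative, cleaner route'' you sketch (reducing via $\tau^{-1}$ and Proposition~\ref{prop:dominant}) is too vague to fill the gap, since it again relies on the existence of an $\cE_\diamond$-admissible epimorphism from $L_1\oplus L_2$, which is precisely what fails.
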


\begin{proof}
    Let $M$ be an indecomposable which is not $\cE_\diamond$-projective. Equivalently, $M \notin \proj(KQ) \cup \cL_1 \cup \cL_2$,  by Corollary~\ref{cor:new}. Let $E_1$ be the module at the end of the coray $\Sigma_{\swarrow}(M)$ and let $E_2$ be the module at the end of the coray $\Sigma_{\nwarrow}(M)$. Since $M \notin \proj(KQ) \cup \cL_1 \cup \cL_2$, we have that $M$, $E_1$, and $E_2$ are all distinct. We now have two cases to consider.

    Suppose first that there is a module $N$ at 
    the intersection 
    $\Sigma_{\nwarrow}(E_1) \cap \Sigma_{\swarrow}(E_2)$. 
    In this case, $(N,M)$ is a nondegenerate rectangular pair. By Lemma~\ref{lem:ext} and Proposition~\ref{prop:diamond_seqs}, this means there is an $\cE_\diamond$-exact sequence $$0 \rightarrow N \rightarrow E_1 \oplus E_2 \rightarrow M \rightarrow 0.$$ Now if $E_1, E_2 \in \cL_1 \cup \cL_n$, then $N \in \proj(KQ)$ by Proposition~\ref{prop:proj_hammock}. Otherwise, without loss of generality we have that $E_1 \in \proj(KQ)$. Now Proposition~\ref{prop:factor} implies that the composite map $N \rightarrow E_1 \rightarrow M$ is nonzero. Since $KQ$ is hereditary, it follows that $N \in \proj(KQ)$ and thus that $N$ is $\cE_\diamond$-projective and hence $\textup{pd}_{\mathcal{E}_\diamond} \,M=1$. 

    Now suppose that there is no module at
    $\Sigma_{\nwarrow}(E_1) \cap \Sigma_{\swarrow}(E_2)$.
    We will show that this assumption leads to a contradiction.
    Choose nonzero morphisms $g_1: E_1 \rightarrow M$ and $g_2: E_2 \rightarrow M$ (these exist and are unique up to scalar multiplication by Lemma~\ref{lem:hom}), and let $p: P \rightarrow M$ be the (standard) projective cover of $M$. Then there is an exact sequence
    \begin{equation}\label{eqn:cover_redundant}\eta = (0 \rightarrow \ker f \rightarrow P \oplus E_1 \oplus E_2 \xrightarrow{f} M \rightarrow 0),\end{equation}
    where $f^\top = \begin{bmatrix} p & g_1 & g_2\end{bmatrix}$.
    (See Example~\ref{ex:degenerate_cover} for an example of why adding the direct summand $P$ is necessary.) We claim that $\eta$ is $\cE_\diamond$-exact. To see this, let $X \in \cL_1 \cup \cL_n$. By Remark~\ref{rem:FX}, it suffices to show that $\Hom(X,f)$ is surjective. We consider the case where $X \in \cL_1$, the case where $X \in \cL_n$ being similar. Now if $\Hom(X,M) = 0$ there is nothing to show. Thus suppose that $\Hom(X,M) \neq 0$. Then Lemma~\ref{lem:diamond_unique} implies that $X$ lies at the end of $\Sigma_{\swarrow}(M)$; i.e., that $X = E_1$. Then any morphism $X \rightarrow M$ must be a scalar multiple of $g_1 = f|_{E_1}$, and thus factors through $f$. We conclude that $\Hom(X,f)$ is surjective.

     Now let $N$ be an indecomposable direct summand of $\ker f$. We will show that $N$ is $\cE_\diamond$-projective.

    First note that if $\Ext^1(M,N) = 0$, then $N$ is a direct summand of $P \oplus E_1 \oplus E_2$ and we are done. Moreover, if there is a nonzero map from $N$ to $P$, then $N$ must be projective.

   It remains to consider the case where $\Ext^1(M,N) \neq 0$ and $\Hom(N,P) = 0$. Suppose that $\Hom(N,E_1) \neq 0$, the case where $\Hom(N,E_2) \neq 0$ being similar. Then Lemmas~\ref{lem:hom} and \ref{lem:ext} and Proposition~\ref{prop:factor} yield two possibilities.

   Suppose first that $\Hom(N,M) \neq 0$.
   For $i \in \{1,2\}$, let $h_i: N \rightarrow E_i$ be the morphism induced by the inclusion $N \subseteq \ker f \subseteq P \oplus E_1 \oplus E_2$. Since we have supposed $\Hom(N,M) \neq 0$ and $\Hom(N,E_1) \neq 0$, Proposition~\ref{prop:factor} implies that $g_1 \circ h_1 \neq 0$. Moreover, the assumption that $\Hom(N,P) = 0$ implies that $0 = f|_N = g_1\circ h_1 + g_2\circ h_2$. We conclude that also $g_2 \circ h_2 \neq 0$, so in particular $\Hom(N,E_2) \neq 0$. But then $(N,M)$ must be a rectangular pair with $N$ lying at the intersection $\Sigma_{\swarrow}(E_1) \cap \Sigma_{\nwarrow}(E_2)$. This is a contradiction to our assumption above.

    It remains to consider the case where $\Hom(N,M) = 0$. By Lemmas~\ref{lem:hom} and~\ref{lem:ext}, this means that $(N,M)$ is a lower deleted pair, see Figure~\ref{fig:global}. From this, we observe that there is a module at the intersection $\Sigma_{\swarrow}(E_1) \cap \Sigma_{\nwarrow}(E_2)$. This  again is a contradiction  to our assumption.
    \end{proof}

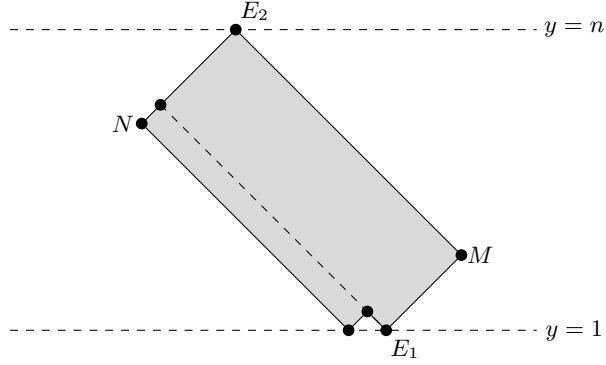
\begin{figure}
{\small
\[
\begin{tikzpicture}
[inner sep=0.5mm]
 \draw [dashed] (6,2) -- (13,2);
\draw [dashed] (6,-2) -- (13,-2);
\node at (13.5,2) {$y = n$};
\node at (13.5,-2) {$y = 1$};

 \filldraw[fill=gray!30!white, draw=black] (7.75,0.75) -- (9,2) -- (12,-1) -- (11,-2) -- (10.75,-1.75) -- (10.5,-2) -- cycle;
 \node at (8,1) [shape=circle,draw,fill] {};
\node at (7.5,0.75) {$N$};
 \node at (7.75,0.75) [shape=circle,draw,fill] {};
 \node at (9,2) [shape=circle,draw,fill] {};
 \node at (12,-1) [shape=circle,draw,fill] {};
  \node at (11,-2) [shape=circle,draw,fill] {};
  \node at (10.75,-1.75) [shape=circle,draw,fill] {};
  \node at (10.5,-2) [shape=circle,draw,fill] {};
  \node at (9.25,2.25) {$E_2$};
   \node at (11.25,-2.25) {$E_1$};
 \node at (12.25,-1) {$M$};
 \draw[dashed] (8,1) -- (11,-2);
  \end{tikzpicture}
\]}
\caption{Schematic diagram of the proof of Proposition~\ref{prop:hereditary}.}\label{fig:global}
\end{figure}

\begin{ex}\label{ex:degenerate_cover}
    Consider the representation $M = {\scriptsize \begin{matrix}1 \ 3 \ \\ \ \ \ 2 \ 4\\ \phantom{11111}5\end{matrix}}$ in Figure~\ref{fig:AR_ex}. The representation at the end of $\Sigma_{\swarrow}(M)$ is $P(1)$ and the representation at the end of $\Sigma_{\nwarrow}(M)$ is $P(5)$. In this case, we have that the morphism $P(1) \oplus P(5) \rightarrow M$ is not and $\cE_\diamond$-admissible epimorphism. (In fact, it is not even an epimorphism.) In the proof of Proposition~\ref{prop:hereditary}, we construct a $\cE_\diamond$-admissible epimorphism $P(3) \oplus P(1) \oplus P(1) \oplus P(5) \rightarrow M$ by adding the (standard) projective cover as a direct summand. Note that, while this is indeed an admissible epimorphism, it is not a minimal cover by the relative projectives of $\cE_\diamond$. That is, it is possible that the exact sequence \eqref{eqn:cover_redundant} admits a nonzero direct summand $N$ of $\ker f$ for which the induced inclusion $N \rightarrow P \oplus E_1 \oplus E_2$ is split. 
\end{ex}

\begin{thm}\label{thm:0_auslander}
    $(\mods KQ,\cE_\diamond)$ is a 0-Auslander category.
\end{thm}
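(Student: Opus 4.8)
The plan is to verify directly that the pair $(\mods KQ, \cE_\diamond)$ satisfies the two conditions in Definition~\ref{def:tilting}. Since $KQ$ is representation finite, $\cE_\diamond$ automatically has enough projectives, so the definition applies. Condition (1), that the $\cE_\diamond$-global dimension of $KQ$ is at most $1$, is exactly the content of Proposition~\ref{prop:hereditary}. Condition (2), that for every $P \in \proj(\cE_\diamond)$ there is an $\cE_\diamond$-admissible short exact sequence $0 \rightarrow P \rightarrow Q \rightarrow I \rightarrow 0$ with $Q \in \proj(\cE_\diamond) \cap \inj(\cE_\diamond)$ and $I \in \inj(\cE_\diamond)$, is what Proposition~\ref{prop:dominant} establishes (it shows the $\cE_\diamond$-dominant dimension is at most $1$). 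So the proof is essentially a one-line invocation of the two preceding propositions.

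The only point requiring a small remark is reconciling the statement of Definition~\ref{def:tilting}(2), which is phrased for an arbitrary $P \in \proj(\cE_\diamond)$, with Proposition~\ref{prop:dominant}, whose proof treats the case of an indecomposable $\cE_\diamond$-projective $M$. One first reduces to indecomposables: if $P = \bigoplus_j P_j$ with each $P_j$ indecomposable $\cE_\diamond$-projective and $0 \rightarrow P_j \rightarrow Q_j \rightarrow I_j \rightarrow 0$ is the desired sequence for each summand, then the direct sum $0 \rightarrow P \rightarrow \bigoplus_j Q_j \rightarrow \bigoplus_j I_j \rightarrow 0$ is again $\cE_\diamond$-admissible (the split-by-split sum of admissible sequences is admissible since $\cE_\diamond$ is an additive subfunctor of $\Ext^1$), with middle term in $\proj(\cE_\diamond) \cap \inj(\cE_\diamond)$ and cokernel in $\inj(\cE_\diamond)$. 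For an indecomposable $\cE_\diamond$-projective $P$, Corollary~\ref{cor:new} gives $P \in \add(\proj KQ \cup \cL_1 \cup \cL_n)$; if $P \in \cL_1 \cup \cL_n$ it is already $\cE_\diamond$-projective-injective by Corollary~\ref{cor:new}(c), so the sequence $0 \rightarrow P \xrightarrow{\mathrm{id}} P \rightarrow 0 \rightarrow 0$ works, and if $P = P(i)$ with $1 < i < n$ we invoke Proposition~\ref{prop:dominant} directly. (The edge cases $P(1)$ and $P(n)$ lie in $\cL_1 \cup \cL_n$, so they are covered by the first case.)

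There is no real obstacle here, since all the work has already been done in Propositions~\ref{prop:dominant} and~\ref{prop:hereditary}; the theorem is the formal payoff. I would simply write:

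\begin{proof}
    Since $KQ$ is representation finite, $\cE_\diamond$ has enough projectives. By Proposition~\ref{prop:hereditary}, the $\cE_\diamond$-global dimension of $KQ$ is $1$, verifying condition (1) of Definition~\ref{def:tilting}. By Proposition~\ref{prop:dominant}, the $\cE_\diamond$-dominant dimension is $1$: indeed, for an indecomposable $\cE_\diamond$-projective $P \in \cL_1 \cup \cL_n$ the identity sequence $0 \rightarrow P \rightarrow P \rightarrow 0 \rightarrow 0$ suffices since $P$ is $\cE_\diamond$-projective-injective by Corollary~\ref{cor:new}(c), and for $P = P(i)$ with $1 < i < n$ the sequence constructed in the proof of Proposition~\ref{prop:dominant} has the required form; the general case follows by taking direct sums. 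Thus condition (2) of Definition~\ref{def:tilting} also holds, and $(\mods KQ, \cE_\diamond)$ is a 0-Auslander category.
\end{proof}
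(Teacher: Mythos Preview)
Your proposal is correct and takes exactly the same approach as the paper: the paper's proof is the single line ``This follows from Propositions~\ref{prop:hereditary} and~\ref{prop:dominant},'' and your version simply spells out in slightly more detail why those two propositions verify the two conditions of Definition~\ref{def:tilting}. The additional remarks about reducing to indecomposable $\cE_\diamond$-projectives and taking direct sums are sound but not needed beyond what the paper already implicitly assumes.
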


\begin{proof}
    This follows from Propositions~\ref{prop:hereditary} and~\ref{prop:dominant}.
\end{proof}

\begin{coro}\label{cor:tilting}
    Let $T \in \mods KQ$. Then $T$ is maximal almost rigid if and only if $T$ is a basic $\cE_\diamond$-tilting module.
\end{coro}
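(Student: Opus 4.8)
The plan is to deduce Corollary~\ref{cor:tilting} from the characterization of $\cE_\diamond$-tilting modules provided by Theorem~\ref{thm:tilting_char}, using the structural results already assembled. First I would observe that Theorem~\ref{thm:0_auslander} shows $(\mods KQ, \cE_\diamond)$ is a 0-Auslander category, so the hypotheses of Theorem~\ref{thm:tilting_char} are in place provided we know $\proj(\cE_\diamond) = \add(P)$ for a single module $P$; indeed, by Corollary~\ref{cor:new}(a) we may take $P$ to be the direct sum of one copy of each indecomposable in $\proj(KQ) \cup \cL_1 \cup \cL_n$ (the algebra being representation-finite, this is a finite module). Hence Theorem~\ref{thm:tilting_char} applies, and for $T \in \mods KQ$ the three conditions ``$T$ is $\cE_\diamond$-tilting'', ``$T$ is maximal $\cE_\diamond$-rigid'', and ``$T$ is complete $\cE_\diamond$-rigid'' are equivalent.

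Next I would connect maximal $\cE_\diamond$-rigidity to maximal almost rigidity. By Corollary~\ref{cor:almost rigid}, a basic module is almost rigid if and only if it is $\cE_\diamond$-rigid. It then follows essentially formally that a basic module $T$ is maximal almost rigid (in the sense of Definition~\ref{def:almost_rigid}: almost rigid, and $T\oplus N$ not almost rigid for any nonzero $N$) if and only if $T$ is maximal $\cE_\diamond$-rigid in the sense of Theorem~\ref{thm:tilting_char}(2). The only point that needs a word of care is the bookkeeping between ``basic'' and the phrasing ``if $T\oplus X$ is $\cE_\diamond$-rigid then $X\in\add(T)$'': if $T$ is basic and $T \oplus N$ is almost rigid with $N \neq 0$, then taking an indecomposable summand $X$ of $N$ not already a summand of $T$ contradicts maximality in the sense of (2); conversely maximality in the sense of (2) together with Corollary~\ref{cor:almost rigid} forbids enlarging $T$ by any new indecomposable while preserving almost rigidity. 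So maximal almost rigid basic modules coincide with maximal $\cE_\diamond$-rigid basic modules.

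Finally I would chain these equivalences: $T$ basic maximal almost rigid $\iff$ $T$ basic maximal $\cE_\diamond$-rigid $\iff$ $T$ basic $\cE_\diamond$-tilting, the last step being Theorem~\ref{thm:tilting_char}. I expect the main (minor) obstacle to be the verification that $\proj(\cE_\diamond)$ is of the form $\add(P)$ for a single $P$ — this is immediate from representation-finiteness and Corollary~\ref{cor:new}(a), but it is the one hypothesis of Theorem~\ref{thm:tilting_char} not literally stated earlier — together with the small amount of care in matching the ``maximal'' conditions across the basic/non-basic distinction. No genuinely hard step is involved; the proof is a short assembly of Theorem~\ref{thm:0_auslander}, Corollary~\ref{cor:new}, Corollary~\ref{cor:almost rigid}, and Theorem~\ref{thm:tilting_char}.
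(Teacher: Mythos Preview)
Your proposal is correct and follows essentially the same approach as the paper: the corollary is stated without an explicit proof there, being intended as an immediate consequence of Theorem~\ref{thm:0_auslander}, Corollary~\ref{cor:almost rigid}, and Theorem~\ref{thm:tilting_char}, which is precisely the chain of implications you assemble. Your additional remarks on verifying $\proj(\cE_\diamond)=\add(P)$ via Corollary~\ref{cor:new}(a) and on reconciling the two phrasings of maximality are accurate and fill in details the paper leaves implicit.
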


\subsection{Mutation of MAR modules}\label{sec:mutation}

It was shown in \cite[Theorem E]{BGMS} that the maximal almost rigid modules form a poset isomorphic to a Cambrian lattice, where the covering relation between two MAR modules $T,T'$ is given as follows. We have $T \mathrel{<\kern-.5em\hbox{$\cdot$}} T'$
if there exist indecomposable summands $M$ of $T$ and $M'$ of $T'$ such that $T/M\cong T'/M'$ and there exists a non-split short exact sequence
\[\xymatrix{0\ar[r]&M\ar[r]^(.4)f&E\oplus E'\ar[r]&M'\ar[r]&0},\]
with $E,E'$ indecomposable summands of $T/M$ and $f$ a minimal $\add T/M$ approximation.
The unique minimal element in this lattice is the unique MAR module that contains every indecomposable projective as a direct summand, and the unique maximal element is the one that contains every indecomposable injective. 

In particular, the covering relation corresponds to the usual notion of mutation, and the Hasse diagram is the (oriented) exchange graph. 

Because of \cite[Theorem C]{BGMS}, the MAR modules are in bijection with the triangulations of a polygon with $n+1$ vertices and mutation has a combinatorial interpretation as the flip of the  diagonal corresponding to the indecomposable $M$ that is exchanged in the mutation. The boundary edges of the polygon are not mutable. They correspond to the indecomposable modules in the top and bottom $\tau$-orbits in the Auslander-Reiten quiver. These modules are precisely the indecomposable relative projective-injectives in our exact structure  $\cE_\diamond$ and therefore required in every MAR module.   
On the other hand, each of the remaining summands is mutable.

In our setting, we also recover the above mutation result from the theory of 0-Auslander algebras. Indeed, Theorem 1.3 in \cite{GNP} implies that for every MAR module $T=\overline{T}\oplus X$, with $X$ indecomposable and non-projective-injective, there exists a unique indecomposable $Y\ncong X$ such that $\overline{T}\oplus Y$ is an MAR module and there is precisely one exchange triangle 
\[\textup{either}\quad \xymatrix{X\ \ar@{>->}[r]^i&F\ar@{->>}[r]^p&Y} \quad \textup{or}\quad 
\xymatrix{Y\ \ar@{>->}[r]^{i'}&F\ar@{->>}[r]^{p'}&X}\]
where $F,F'\in \add \overline{T}$ and $i,i',p,p'$ are $\add \overline{T}$-approximations.

The order relation in \cite{BGMS} is recovered by the uniqueness of the exchange triangle. The fact that the middle term $F$ or $F'$ has at most two indecomposable summands follows from the structure of the Auslander-Reiten quiver in type $\mathbb{A}.$  To show that the middle has exactly two indecomposable summands, let us assume that $F$ is indecomposable. Then every non-trivial extension  between $X$ and $Y$ is indecomposable, because the space of extensions is at most one-dimensional in type $\mathbb{A}$. On the other hand, $T=\overline{T}\oplus X$ and  $\overline{T}\oplus Y$ are almost rigid. However, then $\overline{T}\oplus X\oplus Y$ would be almost rigid, which is a contradiction to the maximality of $T$.

    %%%%%%%%%%%%%%%$
    \section{Generalizations}\label{sect 5}
 \subsection{Type $\mathbb{D}$ quivers}\label{sec:typeD}
In Proposition \ref{prop:diamond_seqs} we showed that the collection of diamond exact sequences defines additive functor, which describes the exact structure  $\cE_\diamond$ on indecomposables. 
We illustrate in this section an example of a quiver of type $\mathbb{D}$ which shows that the diamond exact sequences need not be closed under pushout of morphisms,  thus they do {\em not} define an exact structure for type $\mathbb{D}$ quivers.
 We propose to replace the diamond condition in this case by requesting  three middle terms. This yields an exact structure, which we show to be not hereditary.
Note that a different generalization of almost rigid modules to type $\mathbb{D}$ were given in  the recent preprint \cite{CZ}.

\begin{ex}

 Let $Q$ be  the quiver 
\[\xymatrix@R10pt{&&3\\ 1\ar[r]^\alpha&2\ar[ru]^\beta\ar[rd]_\gamma\\&&4}\]

Its Auslander-Reiten quiver is described as follows:

\[\xymatrix{\begin{tabular}{c} \textcolor{blue}{3} \end{tabular} \ar@{<--}[rr] \ar@{->}[dr]&&   {\begin{tabular}{c} \textcolor{blue}{2} \\ \textcolor{blue}{4} \end{tabular}}
 \ar@{->}[dr] \ar@{<--}[rr] && {\begin{tabular}{c} \textcolor{blue}{1} \\ \textcolor{blue}{2} \\ \textcolor{blue}{3} \end{tabular}} \ar@{->}[dr] \\ & {\begin{tabular}{c} \textcolor{blue}{2} \\ \textcolor{blue}{3} \ \textcolor{blue}{4}  \end{tabular}}\ar@{->}[ur] \ar@{->}[dr] \ar@{->}[r] &  {\begin{tabular}{c} \textcolor{blue}{1} \\ \textcolor{blue}{2} \\ \textcolor{blue}{3} \  \textcolor{blue}{4} \end{tabular}} \ar@{->}[r] &  {\begin{tabular}{c c} 1 \\ 2 \ 2 \\ 3 \ 4 \end{tabular}}\ar@{->}[dr] \ar@{->}[ur] \ar@{->}[r] & \textcolor{blue}{2} \ar@{->}[r] &  {\begin{tabular}{c} 1 \\ 2 \end{tabular}} \ar@{->}[r] & {\begin{tabular}{c} \textcolor{blue}{1} \end{tabular}}\\ \begin{tabular}{c} \textcolor{blue}{4} \end{tabular} \ar@{<--}[rr] \ar@{->}[ur] && {\begin{tabular}{c} \textcolor{blue}{2} \\ \textcolor{blue}{3}  \end{tabular}} \ar@{<--}[rr] \ar@{->}[ur] &&{\begin{tabular}{c} \textcolor{blue}{1} \\ \textcolor{blue}{2} \\ \textcolor{blue}{4} \end{tabular}}  \ar@{->}[ur]   }\]

We consider the diamond exact sequence $\xi :$ \[  \xymatrix{0\ar[r]&{\begin{smallmatrix}
        2\\3\ 4
    \end{smallmatrix}}\ar[r]&{\begin{smallmatrix}
        2\\4
    \end{smallmatrix}}\oplus{\begin{smallmatrix}
        1\\2\\3 \ 4
    \end{smallmatrix}}\ar[r]&{\begin{smallmatrix}
         1 \\ 2 \\ 4
    \end{smallmatrix}}\ar[r]&0.}
    \]

    By taking the push-out of $\xi$ along $f : \begin{smallmatrix}
        2\\3\ 4
    \end{smallmatrix} \rightarrow \begin{smallmatrix}
        2\\3
    \end{smallmatrix}$, we obtain the short exact sequence:
    \[  \xymatrix{0\ar[r]&{\begin{smallmatrix}
        2\\3
    \end{smallmatrix}}\ar[r]&{\begin{smallmatrix}
        1\\2 \ 2 \\3 \ 4
    \end{smallmatrix}}\ar[r]&{\begin{smallmatrix}
        1 \\ 2 \\ 4
    \end{smallmatrix}}\ar[r]&0}
    \] 
    which is not a diamond exact sequence.
    Therefore, the diamond exact sequences do not form an exact structure for type $\mathbb D$ quivers.
\bigskip

    In this case, one could consider the exact structure $\cE_{\diamonddash} := \mathcal{F}_{\mathcal{X}}$ where $\mathcal{X} ={\left\{
    {\begin{smallmatrix}
        \textcolor{blue}{3}
    \end{smallmatrix}},{\begin{smallmatrix}
        \textcolor{blue}{2} \\ \textcolor{blue}{3} \ \textcolor{blue}{4}
    \end{smallmatrix}},{\begin{smallmatrix}
        \textcolor{blue}{4}
    \end{smallmatrix}},{\begin{smallmatrix}
        \textcolor{blue}{2} \\ \textcolor{blue}{4}
    \end{smallmatrix}},{\begin{smallmatrix}
        \textcolor{blue}{2} \\ \textcolor{blue}{3}
    \end{smallmatrix}},
    {\begin{smallmatrix}
        \textcolor{blue}{1} \\ \textcolor{blue}{2} \\ \textcolor{blue}{3} \ \textcolor{blue}{4}
    \end{smallmatrix}},
    {\begin{smallmatrix}
        \textcolor{blue}{1} \\ \textcolor{blue}{2} \\ \textcolor{blue}{3}
    \end{smallmatrix}},
     {\begin{smallmatrix}
        \textcolor{blue}{1} \\ \textcolor{blue}{2} \\ \textcolor{blue}{4}
    \end{smallmatrix}},
     {\begin{smallmatrix}
        \textcolor{blue}{2}
    \end{smallmatrix}},
     {\begin{smallmatrix}
        \textcolor{blue}{1}
    \end{smallmatrix}}
    \right\}}$.

    The only Auslander-Reiten sequences in $\cE_{\diamonddash}$ are the ones having middle terms with three indecomposables:
    \[  \xymatrix{0\ar[r]&{\begin{smallmatrix}
        \textcolor{blue}{2}\\\textcolor{blue}{3}\ \textcolor{blue}{4}
    \end{smallmatrix}}\ar[r]&{\begin{smallmatrix}
        \textcolor{blue}{2}\\\textcolor{blue}{4}
    \end{smallmatrix}}\oplus{\begin{smallmatrix}
        \textcolor{blue}{1}\\\textcolor{blue}{2}\\\textcolor{blue}{3} \ \textcolor{blue}{4}
    \end{smallmatrix}}\oplus{\begin{smallmatrix}
        \textcolor{blue}{2}\\\textcolor{blue}{3} 
    \end{smallmatrix}}\ar[r]&{\begin{smallmatrix}
         1 \\ 2 \ 2 \\ 3 \ 4
    \end{smallmatrix}}\ar[r]&0}
    \] and 
    \[  \xymatrix{0\ar[r]&{\begin{smallmatrix}
        1 \\ 2 \ 2 \\ 3 \ 4
    \end{smallmatrix}}\ar[r]&{\begin{smallmatrix}
        \textcolor{blue}{1} \\ \textcolor{blue}{2}\\\textcolor{blue}{3}
    \end{smallmatrix}}\oplus{\begin{smallmatrix}
        \textcolor{blue}{2}
    \end{smallmatrix}}\oplus{\begin{smallmatrix}
         \textcolor{blue}{1} \\ \textcolor{blue}{2} \\ \textcolor{blue}{4}
    \end{smallmatrix}}\ar[r]&{\begin{smallmatrix}
         1 \\ 2 
    \end{smallmatrix}}\ar[r]&0}
    \] are in $\cE_{\diamonddash}$.

    We observe that one can construct the following $\cE_{\diamonddash}$-projective resolution of the indecomposable $\begin{smallmatrix}
         1 \\ 2 
    \end{smallmatrix}$ :

    \[  \xymatrix{0\ar[r]&{\begin{smallmatrix}
        \textcolor{blue}{2}\\\textcolor{blue}{3}\ \textcolor{blue}{4}
    \end{smallmatrix}}\ar[r]&{\begin{smallmatrix}
        \textcolor{blue}{2}\\\textcolor{blue}{4}
    \end{smallmatrix}}\oplus{\begin{smallmatrix}
        \textcolor{blue}{1}\\\textcolor{blue}{2}\\\textcolor{blue}{3} \ \textcolor{blue}{4}
    \end{smallmatrix}}\oplus{\begin{smallmatrix}
        \textcolor{blue}{2}\\\textcolor{blue}{3} 
    \end{smallmatrix}}\ar[r]&{\begin{smallmatrix}
        \textcolor{blue}{1} \\ \textcolor{blue}{2}\\\textcolor{blue}{3}
    \end{smallmatrix}}\oplus{\begin{smallmatrix}
        \textcolor{blue}{2}
    \end{smallmatrix}}\oplus{\begin{smallmatrix}
         \textcolor{blue}{1} \\ \textcolor{blue}{2} \\ \textcolor{blue}{4}
    \end{smallmatrix}}\ar[r]&{\begin{smallmatrix}
         1 \\ 2 
    \end{smallmatrix}}\ar[r]& 0}
    \] 

    Therefore, $\pd_{\cE_{\diamonddash}}(\begin{smallmatrix}
         1 \\ 2 
    \end{smallmatrix}) = 2 $ and so $(\rep Q,\cE_{\diamonddash})$ is not a \emph{0-Auslander category}. 
It is easy to see that
 ${\begin{smallmatrix}
        \textcolor{blue}{3}
\end{smallmatrix}}\oplus{\begin{smallmatrix}
        \textcolor{blue}{2} \\ \textcolor{blue}{3} \ \textcolor{blue}{4}
    \end{smallmatrix}}\oplus
    {\begin{smallmatrix}
        \textcolor{blue}{4}
    \end{smallmatrix}}\oplus{\begin{smallmatrix}
        \textcolor{blue}{2} \\ \textcolor{blue}{4}
    \end{smallmatrix}}\oplus{\begin{smallmatrix}
        \textcolor{blue}{2} \\ \textcolor{blue}{3}
    \end{smallmatrix}}\oplus
    {\begin{smallmatrix}
        \textcolor{blue}{1} \\ \textcolor{blue}{2} \\ \textcolor{blue}{3} \ \textcolor{blue}{4}
    \end{smallmatrix}}\oplus
    {\begin{smallmatrix}
        \textcolor{blue}{1} \\ \textcolor{blue}{2} \\ \textcolor{blue}{3}
    \end{smallmatrix}}\oplus
     {\begin{smallmatrix}
        \textcolor{blue}{1} \\ \textcolor{blue}{2} \\ \textcolor{blue}{4}
    \end{smallmatrix}}\oplus
     {\begin{smallmatrix}
        \textcolor{blue}{2}
    \end{smallmatrix}}\oplus
     {\begin{smallmatrix}
        \textcolor{blue}{1}
    \end{smallmatrix}}\oplus{\begin{smallmatrix}
       1 \\ 2
    \end{smallmatrix}}$ is \emph{maximal almost rigid} in the sense of Definition~\ref{def:almost_rigid} (replacing diamond exact by having three middle terms),
    but it is \emph{not} $\cE_{\diamonddash}$-tilting since it contains a module of $\cE_{\diamonddash}$-projective dimension 2.
\end{ex}

\subsection{Gentle algebras}\label{sec:gentle} 
In this section, we give an example that illustrates 
how our results may generalize to the much more general setting of gentle algebras.

 It is shown in \cite{OPS,BCS} that gentle algebras are in bijection with surface dissections. In \cite{BCSGS}, the concept of maximal almost rigid modules is generalized to gentle algebras, and the authors prove that many of the results generalize from type $\mathbb{A}$ to the gentle setting. For example the maximal almost rigid modules over a gentle algebra are in bijection with permissible triangulations of its surface 
\cite[Theorem 1.2]{BCSGS}.

For the gentle algebra in the following example  our Theorem \ref{thm:intro} holds.
\begin{ex}
    Let $A$ be given by the quiver 
    \[\xymatrix@R10pt{&&3\\ 1\ar[r]_\alpha \ar@{--}[rru]&2\ar[ru]_\beta\ar[rd]_\gamma\\&&4}\]
    bound by the relation $\alpha\beta.$
    Its Auslander-Reiten quiver is 
    \[\xymatrix@R10pt@C10pt{
    &&&{\begin{smallmatrix}
        1\\2\\4
    \end{smallmatrix}}\ar[rd]
    \\
    {\begin{smallmatrix}
        3
    \end{smallmatrix}}\ar[rd] &&
    {\begin{smallmatrix}
        2\\4
    \end{smallmatrix}}\ar[rd]\ar[ru] &&
    {\begin{smallmatrix}
        1\\2
    \end{smallmatrix}}\ar[rd] 
    \\
    &{\begin{smallmatrix}
        2\\3\ 4
    \end{smallmatrix}}\ar[rd]\ar[ru] &&
    {\begin{smallmatrix}
        2
    \end{smallmatrix}}\ar[ru] &&
    {\begin{smallmatrix}
        1
    \end{smallmatrix}}
    \\
    {\begin{smallmatrix}
        4
    \end{smallmatrix}}\ar[ru] &&
    {\begin{smallmatrix}
        2\\3
    \end{smallmatrix}}\ar[ru] &&
    }
    \]
\end{ex}
Let $\mathcal{X}=\left\{{\begin{smallmatrix}
        3
    \end{smallmatrix}},
    {\begin{smallmatrix}
        4
    \end{smallmatrix}},{\begin{smallmatrix}
        2\\3\ 4
    \end{smallmatrix}},{\begin{smallmatrix}
        1\\2\\4
    \end{smallmatrix}},{\begin{smallmatrix}
        2\\4
    \end{smallmatrix}},{\begin{smallmatrix}
        2\\3
    \end{smallmatrix}},
    {\begin{smallmatrix}
        1
    \end{smallmatrix}}
    \right\}$. Then the direct sum of the elements of $\mathcal{X}$ is a maximal almost rigid $A$-module.

    Since the are only finitely many short exact sequences with indecomposable endterms, we can check by hand that  $\mathcal{F}_{\mathcal{X}}$ contains only short exact sequences with decomposable middle term. Moreover, it is easy to see that the relative global dimension of $\textup{mod}\,A$ is 1. Indeed the only indecomposables that are not $\mathcal{F}_{\mathcal{X}}$-projective are the modules ${\begin{smallmatrix}
        2
    \end{smallmatrix}}$
    and ${\begin{smallmatrix}
        1\\2
    \end{smallmatrix}}$
    and their $\mathcal{F}_{\mathcal{X}}$-projective resolutions are 
    \[  \xymatrix{0\ar[r]&{\begin{smallmatrix}
       2\\3\ 4
    \end{smallmatrix}}\ar[r]&{\begin{smallmatrix}
        2\\4
    \end{smallmatrix}}\oplus 
    {\begin{smallmatrix}
        2\\3
    \end{smallmatrix}}\ar[r]&{\begin{smallmatrix}
        2
    \end{smallmatrix}}\ar[r]&0} 
    \qquad \textup{and} \qquad
    \xymatrix{0\ar[r]&{\begin{smallmatrix}
          2\\3\ 4
    \end{smallmatrix}}\ar[r]&{\begin{smallmatrix}
        1\\2\\4
    \end{smallmatrix}}\oplus 
    {\begin{smallmatrix}
        2\\3
    \end{smallmatrix}}\ar[r]&{\begin{smallmatrix}
        1\\2
    \end{smallmatrix}}\ar[r]&0}.
    \]
    Furthermore, the relative projective-injective modules are
    $\left\{{\begin{smallmatrix}
        3
    \end{smallmatrix}},
    {\begin{smallmatrix}
        4
    \end{smallmatrix}},{\begin{smallmatrix}
        2\\3
    \end{smallmatrix}},{\begin{smallmatrix}
        1\\2\\4
    \end{smallmatrix}},
    {\begin{smallmatrix}
        1
    \end{smallmatrix}}
    \right\}$.

    However, the category is not 0-Auslander. Indeed the only $\mathcal{F}_{\mathcal{X}}$-projectives that are not $\mathcal{F}_{\mathcal{X}}$-projective-injective are ${\begin{smallmatrix}
        2\\3\ 4
    \end{smallmatrix}}$ and $
    {\begin{smallmatrix}
        2\\4
    \end{smallmatrix}},$ and these modules admit the following short exact sequences 
\[  \xymatrix{0\ar[r]&{\begin{smallmatrix}
        2\\3 \ 4
    \end{smallmatrix}}\ar[r]&{\begin{smallmatrix}
        2\\3
    \end{smallmatrix}}\oplus{\begin{smallmatrix}
        1\\2\\4
    \end{smallmatrix}}\ar[r]&{\begin{smallmatrix}
        1\\2
    \end{smallmatrix}}\ar[r]&0} 
    \qquad \textup{and} \qquad
    \xymatrix{0\ar[r]&{\begin{smallmatrix}
        2\\4
    \end{smallmatrix}}\ar[r]&{\begin{smallmatrix}
        1\\2\\4
    \end{smallmatrix}}\ar[r]&{\begin{smallmatrix}
        1
    \end{smallmatrix}}\ar[r]&0}
    \]
    whose middle terms are  $\mathcal{F}_{\mathcal{X}}$-projective-injective and whose endterms are $\mathcal{F}_{\mathcal{X}}$-injective. The problem is that the last sequence is not $\mathcal{F}_{\mathcal{X}}$-admissible, because $S(1)\in \mathcal{X}$ but $ \Hom(S(1),-)$ is not exact on this sequence. Thus this category is not 0-Auslander.

\end{document}